\documentclass[a4paper]{article}
\usepackage{amsmath,amsthm,amssymb,enumerate,mathrsfs}
\setlength{\oddsidemargin}{-0.5em}
\setlength{\textwidth}{45em}
\newtheorem{dfn}{Definition}[section]
\newtheorem{thm}[dfn]{Theorem}

\newtheorem{lem}[dfn]{Lemma}

\newtheorem{rem}[dfn]{Remark}

 \makeatletter
 \@addtoreset{equation}{section} 
\makeatother
\newcommand{\N}{\mathbb{N}}

\newcommand{\R}{\mathbb{R}}

\newcommand{\A}{\mathcal{A}}

\newcommand{\Hom}{\mathop{\mathrm{Hom}}\nolimits}

\def\Xint#1{\mathchoice
 {\XXint\displaystyle\textstyle{#1}}%
 {\XXint\textstyle\scriptstyle{#1}}%
 {\XXint\scriptstyle\scriptscriptstyle{#1}}%
 {\XXint\scriptscriptstyle\scriptscriptstyle{#1}}%
 \!\int} \def\XXint#1#2#3{{\setbox0=\hbox{$#1{#2#3}{\int}$}
 \vcenter{\hbox{$#2#3$}}\kern-.5\wd0}} 
\def\dashint{\Xint{-}}
\makeatletter
 \def\@maketitle{%
\begin{flushright}%
{\large \@date}%
\end{flushright}%
\par\vskip 1.5em
\begin{center}%
{\LARGE \@title \par}%
\end{center}%
\begin{center}%
{\large \@author}%
\end{center}%
\par\vskip 1.5em
 }
\title{Partial regularity for elliptic systems with VMO-coefficients}
\author{Taku Kanazawa \\
Graduate School of Mathematics \\
Nagoya University, JAPAN
}
\date{Ver. 5 July 2013
}
\begin{document}
\maketitle

\begin{center}
\begin{minipage}[t]{10cm}
\small{
\noindent \textbf{Abstract.}
We establish partial H\"older continuity for vector-valued solutions $u:\Omega\to\R^N$ to inhomogeneous elliptic systems
of the type:
\[
 -\mathrm{div}(A(x,u,Du))=f(x,u,Du) \qquad \mathrm{in}\>\Omega ,
\]
where the coefficients $A:\Omega\times\R^N\times\Hom(\R^n,\R^N)\to\Hom(\R^n,\R^N)$ are possibly discontinuous with respect to 
$x$. More precisely, we assume a VMO-condition with respect to the $x$ and continuity with respect to $u$ and prove 
H\"older continuity of the solutions outside of singular sets.
\medskip

\noindent \textbf{Keywords.} Nonlinear elliptic systems, Partial regularity, VMO-coefficients, $\A$-harmonic approximation.
\medskip

\noindent \textbf{Mathematics~Subject~Classification~(2010):}
35J60, 35B65.

}
\end{minipage}
\end{center}
\section{Introduction}
In this paper, we consider the second order nonlinear elliptic systems in divergence form of the following type:
\begin{equation}
 -\mathrm{div}(A(x,u,Du))=f(x,u,Du) \qquad \mathrm{in}\>\Omega .
 \label{system}
\end{equation}
Here $\Omega$ is bounded domain in $\R^n$, $u$ takes values in $\R^N$ with coefficients 
$A:\Omega\times\R^N\times\Hom(\R^n,\R^N)\to\Hom(\R^n,\R^N)$. 

The aim of this paper is to obtain a partial regularity result of weak solutions to \eqref{system}
with discontinuous coefficients. More precisely, we assume that the partial mapping
$x\mapsto A(x,u,\xi)/(1+|\xi|)^{p-1}$ has vanishing mean oscillation (VMO), uniformly in $(u,\xi)$. This means that
$A$ satisfies an estimate
\[
 |A(x,u,\xi)-(A(\cdot,u,\xi))_{x_0,\rho}|\leq V_{x_0}(x,\rho)(1+|\xi|)^{p-1},
\]
where $V_{x_0}:\R^n\times[0,\rho_0]\to[0,2L]$ are bounded functions with 
\[
 \lim_{\rho\searrow 0}V(\rho)=0, \quad 
 V(\rho):=\sup_{x_0\in\Omega}\sup_{0<r\leq\rho}\dashint_{B_r(x_0)\cap\Omega}V_{x_0}(x,r)dx.
\]
We also assume that $u\mapsto A(x,u,\xi)/(1+|\xi|)^{p-1}$ is continuous, that is, there exists a modulus of 
continuity $\omega:[0,\infty)\to[0,\infty)$ such that an estimate
\[
 |A(x,u,\xi)-A(x,u_0,\xi)|\leq L\omega(|u-u_0|^2)(1+|\xi|)^{p-1}
\]
holds. 

Regularity results under a VMO-condition have been established by Zheng \cite{Zheng} for 
quasi-linear elliptic systems or integral functionals. General functionals with VMO-coefficients were 
consider by Ragusa and Tachikawa \cite{RT}, who generalized the low-dimensional results from problems with
continuous coefficients to the case of VMO-coefficients. In particular, these results require that  
the dimension of domain is small, for example, $n\leq p+2$ is required to obtain the H\"{o}lder continuity of 
the minimizers in \cite{RT}. In contrast, B\"{o}gelein, Duzaar, Habermann and Scheven
\cite{BDHS} give the regularity result for homogeneous nonlinear elliptic system without dimension conditions. 

Stronger assumptions such as the H\"{o}lder continuity with respect to $(x,u)$ or a Dini-type condition
lead to partial $C^1$-regularity with a quantitative modulus of continuity for $Du$; the modulus of continuity can 
be determined in dependence on the modulus of continuity of the coefficients (cf.  
Giaquinta and Modica \cite{GM2}, Duzaar and Grotowski \cite{DG}, Duzaar and Gastel \cite{DGa}, Chen and Tan \cite{CT},
Qiu \cite{Qiu} and the references therein). 

As we knew, we could not expect continuity (and not even boundedness) of the gradient $Du$ under continuous coefficients (or 
even more relaxed condition). The regularity result with continuous coefficients was already proved in eighties by Campanato 
\cite{Cam2,Cam3}. He proves that we could still expect local H\"older continuity of the solution $u$ in special cases, 
for instance, in lower dimension $n\leq p+2$. The result with arbitrary dimension was given by 
Foss and Mingione \cite{FM}. 

Our aim is to extend the homogeneous system result in \cite{BDHS} to inhomogeneous system. Therefore 
we assume the same structure conditions to coefficients $A$ as in \cite{BDHS}. Under a suitable assumption to 
inhomogeneous term, we obtain H\"{o}lder continuity of weak solution (see Theorem \ref{pr}).

Our proof is based on so-called $\A$-harmonic approximation (cf. \cite[Lemma 2.1]{DG}; see also Lemma \ref{A-harm}), 
introduced by Duzaar and Grotowski. They give a simplified (direct) proof of regularity results to the systems 
with H\"{o}lder continuous coefficients and a natural growth condition, without $L^p$-$L^2$-estimates for $Du$. 

We close this section by briefly summarizing the notation used in this paper.
As mentioned above, we consider a bounded domain $\Omega\subset\R^n$, and maps from $\Omega$ to $\R^N$,
where we take $n\geq 2$, $N\geq 1$. For a given set $X$ we denote by $\mathscr{L}^n(X)$ the $n$-dimensional 
Lebesgue measure. We write $B_\rho(x_0):=\{ x\in\R^n\> :\> |x-x_0|<\rho\}$. For bounded set 
$X\subset\R^n$ with $\mathscr{L}^n(X)>0$, we denote the average of a given function $g\in L^1(X,\R^N)$ by 
$\dashint_Xgdx$, that is, $\dashint_Xgdx =\frac{1}{\mathscr{L}^n(X)}\int_Xgdx$. In particular, we write
$g_{x_0,\rho}=\dashint_{B_\rho(x_0)\cap\Omega}gdx$. We write $\mathrm{Bil}(\mathrm{Hom}(\R^n,\R^N))$
for the space of bilinear forms on the space $\mathrm{Hom}(\R^n,\R^N)$ of linear maps from $\R^n$ to
$\R^N$. We denote $c$ a positive constant, possibly varying from line by line. Special occurrences will be denoted by 
capital letters $K$, $C_1$, $C_2$ or the like.  

\section{Statement of the results}
\begin{dfn}\label{wsol}
We say $u\in W^{1,p}(\Omega,\R^N)$, $p\geq 2$ is a weak solution of \eqref{system} if $u$ satisfies
\begin{equation}
 \int_\Omega \langle A(x,u,Du),D\varphi\rangle dx
 =\int_\Omega \langle f,\varphi\rangle dx \label{ws}
\end{equation}
for all $\varphi\in C^{\infty}_0(\Omega,\R^N)$, where $\langle\cdot,\cdot\rangle$ is the standard Euclidean 
inner product on $\R^N$ or $\R^{nN}$.
\end{dfn}
We assume following structure conditions.
\begin{enumerate}
\item[({\bf H1})]
 $A(x,u,\xi)$ is differentiable in $\xi$ with continuous derivatives, that is, there exists
 $L\geq 1$ such that
\begin{equation}
 \left| A(x,u,\xi)\right|+(1+|\xi|)
 \left| D_\xi A(x,u,\xi) \right|\leq L(1+|\xi|)^{p-1}
\end{equation}
for all $x\in\Omega$, $u\in\R^N$ and $\xi\in\mathrm{Hom}(\R^n,\R^N)$. Moreover, from this we deduce the modulus of continuity 
function $\mu:[0,\infty)\to [0,\infty)$ such that $\mu$ is bounded, concave, non-decreasing and we have
\begin{equation}
 \left| D_\xi A(x,u,\xi)-D_\xi A(x,u,\xi_0)\right|
 \leq L\mu\left( \frac{|\xi-\xi_0|}{1+|\xi|+|\xi_0|}\right)
 (1+|\xi|+|\xi_0|)^{p-2}
\end{equation}
for all $x\in\Omega$, $u\in\R^N$, $\xi,\xi_0\in\mathrm{Hom}(\R^n,\R^N)$. Without loss of generality, we may assume $\mu\leq 1$.
\item[({\bf H2})]
 $A(x,u,\xi)$ is uniformly strongly elliptic, that is, for some $\lambda>0$ we have
\begin{equation}
 \biggl\langle D_\xi A(x,u,\xi)\nu,\nu \biggr\rangle
 :=\sum_{\substack{1\leq i,\beta\leq N\\ 1\leq j,\alpha\leq n}}
 D_{\xi_\beta^j}A_\alpha^i(x,u,\xi)\nu_i^\alpha\nu_j^\beta
 \geq \lambda |\nu|^2(1+|\xi|)^{p-2}
\end{equation}
for all $x\in\Omega$, $u\in\R^N$, $\xi,\nu\in\mathrm{Hom}(\R^n,\R^N)$. 
\item[({\bf H3})]
 $A(x,u,\xi)$ is continuous with respect to $u$. There exists a bounded, concave and non-decreasing function 
$\omega:[0,\infty)\to [0,\infty)$ satisfying  
\begin{equation}
 |A(x,u,\xi)-A(x,u_0,\xi)|\leq L\omega\left( |u-u_0|^2\right)
 (1+|\xi|)^{p-1}
\end{equation}
for all $x\in\Omega$, $u,u_0\in\R^N$, $\xi\in\mathrm{Hom}(\R^n,\R^N)$. Without loss of generality, we may assume $\omega\leq 1$.
\item[({\bf H4})]
 $x\mapsto A(x,u,\xi)/(1+|\xi|)^{p-1}$ fulfils the following VMO-conditions uniformly in $u$ and $\xi$:
\[
 | A(x,u,\xi)- \left( A(\cdot ,u,\xi)\right)_{x_0 ,\rho} | \leq
 V_{x_0}(x,\rho)(1+|\xi|)^{p-1}, \qquad \text{for all }x\in B_{\rho}(x_0)
\]
whenever $x_0\in\Omega$, $0<\rho<\rho_0$, $u\in\R^N$ and $\xi\in\Hom(\R^n,\R^N)$, where $\rho_0>0$ and 
$V_{x_0}:\R^n\>\times\> [0,\rho_0]\to[0,2L]$ are bounded functions satisfying 
\begin{equation}
 \lim_{\rho\searrow 0}V(\rho)=0, \qquad V(\rho):=\sup_{x_0\in
 \Omega}\sup_{0<r\leq\rho}\dashint_{B_r(x_0)\cap\Omega}V_{x_0}(x,r)dx.
\end{equation}
\item[({\bf H5})]
 $f(x,u,\xi)$ has $p$-growth, that is, there exist constants $a,b\geq 0$, with $a$ possibly depending on $M>0$,
such that 
\begin{equation}
 |f(x,u,\xi)|\leq a(M)|\xi|^p +b
\end{equation}
for all $x\in\Omega$, $u\in\R^N$ with $|u|\leq M$ and $\xi\in\mathrm{Hom}(\R^n,\R^N)$.
\end{enumerate}

Now, we are ready to state our main theorem.

\begin{thm}\label{pr}
Let $u\in W^{1,p}(\Omega,\R^N)\cap L^{\infty}(\Omega,\R^N)$ be a bounded weak solution of \eqref{system} under the 
structure conditions {\rm ({\bf H1}), ({\bf H2}), ({\bf H3}), ({\bf H4})} and {\rm ({\bf H5})} satisfying 
$\|u\|_{\infty}\leq M$ and $2^{(10-9p)/2}\lambda>a(M)M$.
Then there exists an open set $\Omega_u\subseteq\Omega$ with $\mathscr{L}^n(\Omega\setminus\Omega_u)=0$ such that
$u\in C^{0,\alpha}_{\mathrm{loc}}(\Omega_u,\R^N)$ for every $\alpha\in(0,1)$. Moreover, we have 
$\Omega\setminus\Omega_u\subseteq\Sigma_1\cup\Sigma_2$, where
\begin{align*}
 \Sigma_1:&=\left\{ x_0\in\Omega \>\> :\>\>
 \liminf_{\rho\searrow 0}\dashint_{B_\rho(x_0)}|Du-(Du)_{x_0,\rho}|^pdx
 >0\right\}, \\
 \Sigma_2:&=\left\{ x_0\in\Omega \>\> :\>\>
 \limsup_{\rho\searrow 0}|(Du)_{x_0,\rho}|=\infty\right\}.
\end{align*}
\end{thm}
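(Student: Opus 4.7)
The plan is to follow the $\A$-harmonic approximation scheme of Duzaar--Grotowski, adapted to the VMO setting as in \cite{BDHS}, with the additional feature that the inhomogeneous term $f$ must be controlled at every step. Fix $x_0 \in \Omega\setminus(\Sigma_1\cup\Sigma_2)$ and work on balls $B_\rho(x_0)\subset\subset\Omega$; the target is an \emph{excess decay estimate}
\[
\Phi(x_0,\theta\rho) \;\leq\; C\theta^{2}\,\Phi(x_0,\rho) \;+\; \mathrm{err}(\rho),
\]
with $\Phi(x_0,\rho) := \dashint_{B_\rho(x_0)} |Du - (Du)_{x_0,\rho}|^p\,dx$ and $\mathrm{err}(\rho)$ collecting contributions driven by the moduli $V$, $\omega$, $\mu$ and by $f$. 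A standard iteration of this inequality, combined with Campanato's integral characterisation of Hölder continuity, will then produce $u\in C^{0,\alpha}_{\mathrm{loc}}$ for every $\alpha<1$ in a neighbourhood of $x_0$.

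The first step is a Caccioppoli-type inequality for the affinely corrected map $u - u_{x_0,\rho} - (Du)_{x_0,\rho}(x-x_0)$, comparing its $L^p$-norm on $B_{\rho/2}(x_0)$ with its $L^p$-norm on $B_\rho(x_0)$. This is where the smallness hypothesis $2^{(10-9p)/2}\lambda > a(M)M$ enters: after testing the weak formulation with a cut-off of the corrected map, the natural-growth contribution from $f$, bounded via \textbf{(H5)} by $a(M)|Du|^p+b$, has to be absorbed into the ellipticity term supplied by \textbf{(H2)}, which is possible thanks to $\|u\|_{\infty}\le M$ and the quantitative gap in the hypothesis.

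The heart of the argument is the linearisation at $x_0$: freezing the coefficient to $\A := D_\xi A(x_0,u_{x_0,\rho},(Du)_{x_0,\rho})$ and using \eqref{ws}, the map $u$ is shown to be approximately $\A$-harmonic on $B_\rho(x_0)$, with approximation error governed by
\[
\bigl|A(x,u,Du) - A(x_0,u_{x_0,\rho},(Du)_{x_0,\rho}) - \A\,(Du-(Du)_{x_0,\rho})\bigr| \;+\; |f|,
\]
whose three coefficient-dependent pieces are handled via \textbf{(H4)} (through $V_{x_0}(\cdot,\rho)$), \textbf{(H3)} (through $\omega$ and Poincaré applied to $u-u_{x_0,\rho}$), and \textbf{(H1)} (through $\mu$), while the $f$-piece is controlled via \textbf{(H5)} combined with the Caccioppoli bound just proved. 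Lemma \ref{A-harm} then supplies an $\A$-harmonic comparison map $h$ that is $L^2$-close to $u$ on a smaller concentric ball; classical constant-coefficient estimates for $h$ give $\theta^{2}$-decay for the excess of $Dh$, and the triangle inequality transfers this to an excess decay for $Du$ up to $\mathrm{err}(\rho)$.

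The main technical obstacle is keeping $\mathrm{err}(\rho)$ uniformly small as $\rho$ shrinks: the VMO contribution $V(\rho)$ decays only qualitatively, which is the reason the conclusion reads ``for every $\alpha<1$'' rather than for a single quantitative exponent. The iteration must therefore be initialised in the small-excess regime, and the smallness then propagates. Concretely, at $x_0\notin\Sigma_1\cup\Sigma_2$ one can pick $\rho_\ast>0$ so small that $\Phi(x_0,\rho_\ast)$ and $|(Du)_{x_0,\rho_\ast}|$ both lie below any prescribed threshold; absolute continuity of the two averages extends these bounds to an open neighbourhood of $x_0$, so the excess-decay inequality can be iterated simultaneously at every nearby point. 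For each fixed $\alpha<1$ this iteration yields $\Phi(y,r)\le c(\alpha)\,r^{p\alpha}$ on the neighbourhood, and Campanato's theorem converts this into $u\in C^{0,\alpha}$ there; the openness of $\Omega_u$ is automatic from the neighbourhood construction, and the inclusion $\Omega\setminus\Omega_u\subseteq\Sigma_1\cup\Sigma_2$ follows.
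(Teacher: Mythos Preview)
Your plan has a genuine structural gap: the excess you propose to iterate is the \emph{gradient} excess $\Phi(x_0,\rho)=\dashint_{B_\rho}|Du-(Du)_{x_0,\rho}|^p\,dx$, and you claim the iteration will deliver $\Phi(y,r)\le c(\alpha)r^{p\alpha}$. This cannot come out of an inequality of the form $\Phi(\theta\rho)\le C\theta^{2}\Phi(\rho)+\mathrm{err}(\rho)$ when $\mathrm{err}(\rho)$ contains the VMO term $V(\rho)$, which by hypothesis tends to zero \emph{without any rate}. Iterating such an inequality only gives $\Phi(\theta^k\rho)\lesssim \theta^{2k}\Phi(\rho)+\sup_{j<k}\mathrm{err}(\theta^j\rho)$, and the second term need not decay like a power of the radius. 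Equivalently, a power decay of the gradient excess would yield $Du\in C^{0,\alpha}$ by Campanato, which is strictly stronger than the theorem's conclusion and is in general \emph{false} under a mere VMO assumption on the coefficients; this is precisely why the paper (and \cite{BDHS,FM}) only asserts $u\in C^{0,\alpha}$.

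The paper circumvents this by working with the \emph{zeroth-order} excess $\Psi(\rho)=\dashint_{B_\rho}\bigl\{\rho^{-2}|u-\ell_{x_0,\rho}|^2/(1+|D\ell_{x_0,\rho}|)^2+\rho^{-p}|u-\ell_{x_0,\rho}|^p/(1+|D\ell_{x_0,\rho}|)^p\bigr\}\,dx$ together with a separate Campanato-type quantity $C_\alpha(x_0,\rho)=\rho^{-2\alpha}\dashint_{B_\rho}|u-u_{x_0,\rho}|^2\,dx$. The excess-improvement lemma gives $\Psi(\theta\rho)\le C_3\theta^2\Psi_*(\rho)$, but the iteration (Lemma~\ref{iteration}) does \emph{not} extract a decay rate for $\Psi$; it only propagates the smallness $\Psi(\theta^k\rho)<\varepsilon_*$ while simultaneously keeping $C_\alpha(x_0,\theta^k\rho)<\kappa_*$. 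It is the uniform bound on $C_\alpha$, not any decay of $\Psi$, that yields $\dashint_{B_r}|u-u_{r}|^2\le \kappa_* r^{2\alpha}$ and hence $u\in C^{0,\alpha}$. A secondary point: because $x\mapsto A(x,u,\xi)$ is only VMO, freezing at the single point $x_0$ as you do with $\A:=D_\xi A(x_0,u_{x_0,\rho},(Du)_{x_0,\rho})$ does not produce a controllable error; the paper instead freezes at the ball average $\bigl(D_\xi A(\cdot,\ell(x_0),D\ell)\bigr)_{x_0,\rho}$, so that the discrepancy is governed by $V_{x_0}(x,\rho)$ via ({\bf H4}).
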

\section{Preliminaries}
In this section we present $\A$-harmonic approximation lemma and some standard estimates 
for the proof of the regularity theorem. 

First we state the definition of $\A$-harmonic function and recall $\A$-harmonic approximation lemma as below.

\begin{dfn}[{\cite[Section 1]{DG}}]
For a given $\A\in\mathrm{Bil}(\mathrm{Hom}(\R^n,\R^N))$, we say that $h\in W^{1,p}(\Omega,\R^N)$ is 
an $\A$-harmonic function, if $h$ satisfies
\[
 \int_{\Omega}\A(Dh,D\varphi)dx=0
\]
for all $\varphi\in C^\infty_0(\Omega,\R^N)$.
\end{dfn}

\begin{lem}[{\cite[Lemma 2.3]{BDHS}}]\label{A-harm}
Let $\lambda>0$, $L>0$, $p\geq 2$ and $n,N\in\N$ with $n\geq 2$ given. For every $\varepsilon>0$,
there exists a constant $\delta=\delta(n,N,L,\lambda,\varepsilon)\in(0,1]$ such that the following holds:
assume that $\gamma\in [0,1]$ and $\A\in\mathrm{Bil}(\mathrm{Hom}(\R^n,\R^N))$ with the property 
\begin{align}
 \mathcal{A}(\nu,\nu)&\geq\lambda|\nu|^2,\qquad
 \text{for all }\nu\in\mathrm{Hom}(\R^n,\R^N), \\
 \mathcal{A}(\nu,\tilde{\nu})&\leq L|\nu||\tilde{\nu}|,\qquad
 \text{for all }\nu,\tilde{\nu}\in\mathrm{Hom}(\R^n,\R^N).
\end{align}
Furthermore, let $g\in W^{1,2}(B_\rho(x_0),\R^N)$ be an approximately $\A$-harmonic map in sense that there holds
\begin{align}
 \dashint_{B_\rho(x_0)}&\left\{ |Dg|^2+\gamma^{p-2}|Dg|^p\right\} dx\leq 1, \\
 \left|\dashint_{B_\rho(x_0)}\A(Dg,D\varphi)dx\right|
 &\leq\delta\sup_{B_\rho(x_0)}|D\varphi|,\quad \text{for all }\varphi
 \in C^1_c(B_\rho(x_0),\R^N).
\end{align}
Then there exists an $\A$-harmonic function $h$ that satisfies
\begin{align}
 \dashint_{B_\rho(x_0)}&\left\{\left|\frac{h-g}{\rho}\right|^2
 +\gamma^{p-2}\left|\frac{h-g}{\rho}\right|^p\right\} dx\leq\varepsilon , \\
 \dashint_{B_\rho(x_0)}&\left\{|Dh|^2
 +\gamma^{p-2}|Dh|^p\right\} dx\leq c(n,p).
\end{align}
\end{lem}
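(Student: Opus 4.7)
The plan is to argue by compactness and contradiction, adapting the classical Duzaar--Grotowski proof of the $\A$-harmonic approximation lemma to the $\gamma^{p-2}$-weighted energy. Rescale to $\rho=1$, $x_0=0$. Suppose the statement fails for some $\varepsilon>0$: for each $k\in\N$ there exist $\gamma_k\in[0,1]$, bilinear forms $\A_k$ with ellipticity $\lambda$ and bound $L$, and maps $g_k\in W^{1,2}(B_1,\R^N)$ obeying the two hypotheses with $\delta_k=1/k$, yet no $\A_k$-harmonic function satisfies the conclusion. Passing to subsequences one obtains $\A_k\to\A_0$ (the bilinear forms with those fixed bounds form a compact subset of $\mathrm{Bil}(\mathrm{Hom}(\R^n,\R^N))$), $\gamma_k\to\gamma_0\in[0,1]$, and after subtracting means $g_k\rightharpoonup g_0$ in $W^{1,2}$ with $g_k\to g_0$ in $L^2$ by Rellich.

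Passing to the limit in the defect inequality via the splitting
\[
 \int_{B_1}\A_0(Dg_0,D\varphi)\,dx=\int_{B_1}(\A_0-\A_k)(Dg_0,D\varphi)\,dx+\int_{B_1}\A_k(Dg_0-Dg_k,D\varphi)\,dx+\int_{B_1}\A_k(Dg_k,D\varphi)\,dx,
\]
where the first two terms vanish from $\A_k\to\A_0$ and the weak convergence $Dg_k\rightharpoonup Dg_0$, while the last is bounded by $\delta_k\sup|D\varphi|\to 0$, one identifies $g_0$ as $\A_0$-harmonic. For the comparison map, for each $k$ let $h_k$ be the unique $\A_k$-harmonic function on $B_1$ with $h_k-g_k\in W^{1,2}_0(B_1,\R^N)$ (existence by Lax--Milgram); testing with $\varphi=h_k-g_k$ gives the Dirichlet estimate $\|Dh_k\|_{L^2}\leq(L/\lambda)\|Dg_k\|_{L^2}\leq L/\lambda$. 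Extracting once more, $h_k\rightharpoonup h_0$ in $W^{1,2}$; the defect-free $\A_k$-harmonic identity passes cleanly to the limit, so $h_0$ is $\A_0$-harmonic with $h_0-g_0\in W^{1,2}_0$, forcing $h_0=g_0$ by uniqueness for the Dirichlet problem. Rellich then yields $h_k\to g_0$ in $L^2$, hence $h_k-g_k\to 0$ in $L^2$, contradicting the assumption for the $L^2$ half of the conclusion at large $k$.

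The main obstacle is the $\gamma^{p-2}$-weighted $L^p$ component. I would handle it by splitting into two regimes. When $\gamma_0>0$ the hypothesis forces $g_k$ bounded in $W^{1,p}(B_1)$, then linear $L^p$-regularity for the constant-coefficient system $\int\A_k(Dh_k,D\varphi)=0$ renders $h_k$ bounded in $W^{1,p}$, and Rellich produces $h_k\to g_0$ strongly in $L^p$, which kills the weighted term. When $\gamma_0=0$ the factor $\gamma_k^{p-2}\to 0$ itself kills a Sobolev-controlled norm of $h_k-g_k$ obtained from $W^{1,2}_0\hookrightarrow L^{2^*}$ (interpolated against $\|h_k-g_k\|_{L^2}\to 0$ to cover $p>2^*$). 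The bound on $Dh$ in the conclusion follows from the Dirichlet energy estimate together with the $L^p$-regularity just invoked; the delicate bookkeeping is tracking uniform bounds through the degenerate regime $\gamma_k\searrow 0$, where $g_k$ need not be bounded in $W^{1,p}$ and only the weighted energy $\dashint|Dg_k|^2+\gamma_k^{p-2}|Dg_k|^p\leq 1$ is available.
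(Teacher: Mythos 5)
The paper does not prove this lemma at all: it is quoted verbatim from \cite[Lemma 2.3]{BDHS}, so there is no internal proof to compare against. Your compactness/contradiction scheme is the standard route to such statements and the $L^2$ half of your argument is sound: the limit identification of $g_0$ as $\A_0$-harmonic, the Lax--Milgram comparison maps $h_k$, the identification $h_0=g_0$ via uniqueness, and Rellich giving $h_k-g_k\to 0$ in $L^2$ all go through.

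The genuine gap is in the weighted $L^p$ term when $\gamma_k\to 0$, $n\geq 3$ and $p>2^*=2n/(n-2)$. You propose to bound $\|h_k-g_k\|_{L^p}$ by the embedding $W^{1,2}_0\hookrightarrow L^{2^*}$ ``interpolated against $\|h_k-g_k\|_{L^2}\to 0$ to cover $p>2^*$''. Interpolation between $L^2$ and $L^{2^*}$ only produces exponents in $[2,2^*]$; it cannot reach any $p>2^*$, so in that range you have no bound on $\|h_k-g_k\|_{L^p}$ from the ingredients you use in the degenerate regime, and the step fails. The repair is to import into this regime exactly the tool you reserve for the case $\gamma_0>0$: the global Calder\'on--Zygmund estimate $\|Dh_k\|_{L^p}\leq C\|Dg_k\|_{L^p}$ combined with the hypothesis $\gamma_k^{p-2}\dashint|Dg_k|^p\,dx\leq 1$ gives $\|D(h_k-g_k)\|_{L^p}\leq c\,\gamma_k^{-(p-2)/p}$, hence by Sobolev--Poincar\'e $\|h_k-g_k\|_{L^{p^*}}\leq c\,\gamma_k^{-(p-2)/p}$ with $p^*=np/(n-p)>p$ (any finite exponent if $p\geq n$). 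Interpolating $L^p$ between $L^2$ and $L^{p^*}$, with $1/p=(1-\vartheta)/2+\vartheta/p^*$ and $\vartheta\in(0,1)$, yields
\[
 \gamma_k^{p-2}\|h_k-g_k\|_{L^p}^p\leq c\,\gamma_k^{(1-\vartheta)(p-2)}\|h_k-g_k\|_{L^2}^{(1-\vartheta)p}\leq c\,\|h_k-g_k\|_{L^2}^{(1-\vartheta)p}\longrightarrow 0,
\]
which closes the argument uniformly in $\gamma_k$ and makes your case split unnecessary. Two smaller points: (a) with this construction the energy bound on $h$ comes out as $c(n,N,p,\lambda,L)$ rather than the stated $c(n,p)$ (harmless for the applications in this paper, but not literally the quoted statement, which \cite{BDHS} obtain by taking the comparison map from the weak limit itself so that only lower semicontinuity constants enter); (b) when $\gamma_k=0$ exactly, the weighted terms vanish identically and no argument is needed, so you may assume $\gamma_k>0$ when invoking $\gamma_k^{-(p-2)/p}$.
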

Next is a standard estimates for the solutions to homogeneous second order elliptic systems with
constant coefficients, due originally to Campanato \cite[Teorema 9.2]{Cam}. For convenience, we state the estimate in 
a slightly general form than the original one.
\begin{thm}[{\cite[Theorem 2.3]{DG}}]\label{Campanato}
Consider $\A$, $\lambda$ and $L$ as in Lemma \ref{A-harm}. Then there exists $C_0\geq 1$ depending only on 
$n,N,\lambda$ and $L$ such that any $\A$-harmonic function $h$ on $B_{\rho/2}(x_0)$ satisfies
\begin{equation}
 \left(\frac{\rho}{2}\right)^2\sup_{{B_{\rho/4}}(x_0)}|Dh|^2
 +\left(\frac{\rho}{2}\right)^4\sup_{B_{\rho/4}(x_0)}|D^2h|^2
 \leq C_0\left(\frac{\rho}{2}\right)^2\dashint_{B_{\rho/2}(x_0)}|Dh|^2dx. \label{campanato}
\end{equation}
\end{thm}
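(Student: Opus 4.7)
The plan is to reduce the statement to a scale-invariant form on the unit ball, exploit the fact that $\A$-harmonicity passes to derivatives since $\A$ is a constant bilinear form, and then close by iterated Caccioppoli plus a Sobolev embedding.

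First, via the change of variables $\tilde h(y) = h(x_0 + \sigma y)$ with $\sigma = \rho/2$, one checks that all three terms in the desired estimate scale in the same way (the factors $\sigma^2$ and $\sigma^4$ multiplying the supremums of $|Dh|^2$ and $|D^2 h|^2$ are precisely what is needed to cancel the chain-rule factors $\sigma^{-2}$ and $\sigma^{-4}$ coming from $|Dh|^2 = \sigma^{-2}|D\tilde h|^2$ and $|D^2 h|^2 = \sigma^{-4}|D^2 \tilde h|^2$), so it suffices to prove
\begin{equation*}
\sup_{B_{1/2}} |D\tilde h|^2 + \sup_{B_{1/2}} |D^2 \tilde h|^2 \leq C_0 \dashint_{B_1} |D\tilde h|^2\, dy
\end{equation*}
for any $\A$-harmonic $\tilde h$ on $B_1$ with the same ellipticity constants $\lambda, L$. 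Since $\A$ has no $x$-dependence, differentiating the weak equation in a direction $e_k$ (justified by the difference-quotient method) shows that each $\partial_k \tilde h$ is itself $\A$-harmonic, and iterating, every partial derivative $\partial^\alpha \tilde h$ is $\A$-harmonic; this also yields the interior $C^\infty$-smoothness of $\tilde h$ via the standard regularity theory for constant-coefficient linear elliptic systems.

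The next step is a Caccioppoli-type inequality for $\A$-harmonic functions: testing the equation for a function $w$ with $\varphi = \eta^2 w$, where $\eta \in C^\infty_c(B_R)$ is a cut-off satisfying $\eta \equiv 1$ on $B_r$ and $|D\eta| \leq 2/(R-r)$, and then invoking the ellipticity $\A(\nu,\nu) \geq \lambda |\nu|^2$, the bound $|\A(\nu,\tilde\nu)| \leq L|\nu||\tilde\nu|$, and Young's inequality in the standard way, gives
\begin{equation*}
\int_{B_r} |Dw|^2\, dy \leq \frac{C(\lambda, L)}{(R-r)^2} \int_{B_R} |w|^2\, dy.
\end{equation*}
I would apply this with $w = \partial^\alpha \tilde h$ along a chain of shrinking radii $1 = r_0 > r_1 > \dots > r_m = 1/2$ with $r_{k-1} - r_k = 1/(2m)$: at step $k$ it yields $\|D^{k+1} \tilde h\|_{L^2(B_{r_k})}^2 \leq C(2m)^2 \|D^k \tilde h\|_{L^2(B_{r_{k-1}})}^2$, and composing from $k = 1$ through $k = m$ produces $\|D^{m+1} \tilde h\|_{L^2(B_{1/2})}^2 \leq C_m \|D \tilde h\|_{L^2(B_1)}^2$ with $C_m$ depending only on $m, n, N, \lambda, L$.

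Finally, fixing any integer $m > n/2$, the Sobolev embedding $W^{m,2}(B_{1/2}) \hookrightarrow C^1(\overline{B_{1/2}})$ applied to the vector field $D\tilde h$ yields
\begin{equation*}
\sup_{B_{1/2}} |D\tilde h|^2 + \sup_{B_{1/2}} |D^2 \tilde h|^2 \leq C \sum_{k=1}^{m+1} \int_{B_{1/2}} |D^k \tilde h|^2\, dy \leq C_0(n, N, \lambda, L) \int_{B_1} |D \tilde h|^2\, dy,
\end{equation*}
and undoing the scaling recovers the theorem as stated. The argument is classical, and I do not anticipate any real obstacle; the only delicate point is bookkeeping the $\rho$-factors so that the final estimate assumes the exact form stated with $\dashint$ on the right-hand side, which is handled by the scale-invariance check in the first step.
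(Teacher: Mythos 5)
The paper gives no proof of this theorem at all --- it is imported verbatim from \cite[Theorem 2.3]{DG} (ultimately Campanato \cite{Cam}) --- and your argument is exactly the classical one behind that citation: rescale to $B_1$, note that constant coefficients make every derivative of an $\A$-harmonic map again $\A$-harmonic via difference quotients, iterate the Caccioppoli inequality over shrinking radii, and finish with Sobolev embedding; your scale-invariance bookkeeping for the factors $(\rho/2)^2$, $(\rho/2)^4$ and the mean-value integral is correct. The one slip is that $W^{m,2}(B_{1/2})\hookrightarrow C^1(\overline{B_{1/2}})$ requires $m>1+n/2$, not $m>n/2$; either enlarge $m$ accordingly or apply the $C^0$-embedding (valid for $m>n/2$) separately to $D\tilde h$ and to $D^2\tilde h$, which your Caccioppoli chain already controls by running one step further.
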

We state the Poincar\'e inequality (Lemma \ref{Poincare}) in a convenient form. The proof can be founded in several literature, 
for example {\cite[Proposition 3.10]{GM}}.
\begin{lem}\label{Poincare}
There exists $C_P\geq 1$ depending only on $n$ such that every $u\in
W^{1,p}(B_\rho(x_0),\R^N)$ satisfies
\begin{equation}
 \int_{B_\rho(x_0)}|u-u_{x_0,\rho}|^pdx\leq
C_P\rho^p\int_{B_\rho(x_0)}|Du|^pdx.
 \label{poincare}
\end{equation}
\end{lem}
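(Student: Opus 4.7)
The plan is to reduce to the unit ball by a scaling argument and then establish the resulting fixed-domain estimate. First I would introduce the rescaled map $v(y):=u(x_0+\rho y)$ on $B_1(0)$, so that $Dv(y)=\rho\,Du(x_0+\rho y)$ and $v_{0,1}=u_{x_0,\rho}$. A direct change of variables in both integrals converts the desired bound into
\[
 \int_{B_1(0)}|v-v_{0,1}|^p\,dy\leq C_P\int_{B_1(0)}|Dv|^p\,dy,
\]
so it suffices to prove the Poincaré inequality on the fixed unit ball with a constant depending only on $n$ (and implicitly on $p$, although the factor of $\rho^p$ carries all the scaling).

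For the unit-ball estimate I would argue by contradiction. If it failed for every constant, there would exist a sequence $v_k\in W^{1,p}(B_1(0),\R^N)$ with $(v_k)_{0,1}=0$, $\|v_k\|_{L^p(B_1(0))}=1$ and $\|Dv_k\|_{L^p(B_1(0))}\to 0$. The sequence is bounded in $W^{1,p}(B_1(0),\R^N)$, so by the Rellich--Kondrachov compactness theorem a subsequence converges strongly in $L^p$ to some limit $v_\infty$ with $(v_\infty)_{0,1}=0$ and $\|v_\infty\|_{L^p(B_1(0))}=1$. Weak lower semicontinuity of the gradient norm forces $Dv_\infty\equiv 0$, so $v_\infty$ is a constant on the connected ball, and the vanishing mean then forces $v_\infty\equiv 0$, contradicting the $L^p$-normalization. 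Undoing the scaling yields the statement on $B_\rho(x_0)$.

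An alternative, more quantitative route avoids the compactness detour by using the representation
\[
 u(x)-u(y)=\int_0^1 Du(y+t(x-y))\cdot(x-y)\,dt
\]
(for smooth $u$ first, then by density), averaging over $y\in B_\rho(x_0)$ to get a pointwise bound on $|u(x)-u_{x_0,\rho}|$ by a Riesz-type potential of $|Du|$, and then applying Minkowski's inequality together with Fubini and Hölder. Since there is essentially no obstacle here beyond bookkeeping, and the paper already points to \cite[Proposition~3.10]{GM}, in practice I would simply verify the scaling reduction and invoke that reference. The only minor subtlety is tracking the dependence of $C_P$ on $n$ (versus also on $p$) as claimed in the statement; the second, explicit approach makes this dependence transparent.
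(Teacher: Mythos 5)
Your proposal is correct. Note that the paper itself offers no proof of this lemma: it simply refers the reader to \cite[Proposition 3.10]{GM}, so anything you write here is necessarily ``different'' in that you actually supply an argument. Both of your routes work. The scaling reduction is computed correctly ($v(y)=u(x_0+\rho y)$ gives $\int_{B_\rho}|Du|^p\,dx=\rho^{n-p}\int_{B_1}|Dv|^p\,dy$, so the factor $\rho^p$ comes out exactly), and the compactness argument on $B_1(0)$ is the standard contradiction proof: Rellich--Kondrachov gives strong $L^p$ convergence of a subsequence, which preserves both the normalization $\|v_\infty\|_{L^p}=1$ and the zero mean, while weak lower semicontinuity kills the gradient, forcing $v_\infty$ to be a vanishing constant. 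The one point worth being careful about is the one you flagged: the compactness route only shows that for each fixed $p$ some finite constant exists, and a priori that constant depends on $p$ as well as $n$, whereas the lemma asserts $C_P=C_P(n)$. The representation-formula route settles this: averaging the fundamental-theorem identity over $y\in B_\rho(x_0)$ yields $|u(x)-u_{x_0,\rho}|\leq c(n)\int_{B_\rho(x_0)}|Du(y)|\,|x-y|^{1-n}\,dy$, and Young's convolution inequality with the kernel $|z|^{1-n}\chi_{\{|z|\leq 2\rho\}}$, whose $L^1$-norm is $c(n)\rho$, gives the $L^p$ bound with a constant independent of $p$. So if you want the statement exactly as written, use (or at least mention) the second argument rather than the compactness one.
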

Given a function $u\in L^2(B_\rho(x_0),\R^N)$, where $x_0\in\R^n$ and $\rho>0$. 
We write $\ell_{x_0,\rho}$ for the minimizer of the functional
\begin{equation}
 \ell\mapsto \dashint_{B_\rho(x_0)}|u-\ell|^2dx \label{7}
\end{equation}
among all affine functions $\ell:\R^n\to\R^N$. 
Let write $\ell_{x_0,\rho}(x):=\ell_{x_0,\rho}(x_0)+D\ell_{x_0,\rho}(x-x_0)$.
It is easy to check that $\ell_{x_0,\rho}(x_0)=u_{x_0,\rho}$ and 
\begin{equation}
 D\ell_{x_0,\rho}=\frac{n+2}{\rho^2}\dashint_{B_\rho(x_0)}u\otimes (x-x_0)dx,
 \label{mini}
\end{equation}
where $\xi\otimes\zeta=\xi_i\zeta^\alpha$. Based on this formula, elementary calculations yield the following estimates.
\begin{lem}[{\cite[Lemma 2]{Kronz}}]
Assume $u\in L^2(B_\rho(x_0),\R^N)$, $x_0\in\R^n$, $\rho>0$ and $0<\theta\leq 1$. 
With $\ell_{x_0,\rho}$ and $\ell_{x_0,\theta\rho}$, we denote the affine functions from $\R^n$ to $\R^N$ defined as above for the 
radii $\rho$ and $\theta\rho$ respectively. Then we have
\begin{equation}
 |D\ell_{x_0,\rho}-D\ell_{x_0,\theta\rho}|^2
 \leq \frac{n(n+2)}{(\theta\rho)^2}\dashint_{B_{\theta\rho}(x_0)}
 |u-\ell_{x_0,\rho}|^2dx, \label{8}
\end{equation}
and more generally,  
\begin{equation}
 |D\ell_{x_0,\rho}-D\ell|^2
 \leq \frac{n(n+2)}{\rho^2}\dashint_{B_{\rho}(x_0)}|u-\ell|^2dx, \label{9}
\end{equation}
for all affine functions $\ell:\R^n\to\R^N$.
\end{lem}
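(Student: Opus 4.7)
The plan is to exploit the fact that the explicit formula \eqref{mini} is linear in $u$ and reproduces affine functions exactly. First I would verify the reproducing property: if $\ell(x)=\ell(x_0)+L(x-x_0)$ is any affine map, then plugging $\ell$ into the right-hand side of \eqref{mini} yields $L$. Indeed, the constant term $\ell(x_0)$ contributes $\dashint_{B_\rho(x_0)}\ell(x_0)\otimes(x-x_0)\,dx=0$ by the symmetry of the ball, and the linear term contributes
\[
\frac{n+2}{\rho^2}\,L^\alpha_k\dashint_{B_\rho(x_0)}(x-x_0)_k(x-x_0)_j\,dx = \frac{n+2}{\rho^2}\,L^\alpha_j\cdot\frac{\rho^2}{n+2}=L^\alpha_j,
\]
using $\dashint_{B_\rho(x_0)}(x-x_0)_k(x-x_0)_j\,dx=\delta_{kj}\,\rho^2/(n+2)$, which in turn follows from $\dashint_{B_\rho(x_0)}|x-x_0|^2\,dx=\tfrac{n}{n+2}\rho^2$ together with rotational symmetry.

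Next I would prove \eqref{9} directly. By \eqref{mini} applied to $u$ and the reproducing property just established, for any affine $\ell:\R^n\to\R^N$,
\[
D\ell_{x_0,\rho}-D\ell=\frac{n+2}{\rho^2}\dashint_{B_\rho(x_0)}(u-\ell)\otimes(x-x_0)\,dx.
\]
Applying the Cauchy--Schwarz inequality on the averaged integral and using the moment computation above gives
\[
|D\ell_{x_0,\rho}-D\ell|^2\leq\frac{(n+2)^2}{\rho^4}\left(\dashint_{B_\rho(x_0)}|u-\ell|^2\,dx\right)\left(\dashint_{B_\rho(x_0)}|x-x_0|^2\,dx\right),
\]
and since the last average equals $\tfrac{n}{n+2}\rho^2$, the factors collapse to $n(n+2)/\rho^2$, giving \eqref{9}.

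Finally, \eqref{8} follows immediately from \eqref{9} by choosing the radius to be $\theta\rho$ and the affine comparison map to be $\ell=\ell_{x_0,\rho}$, which is permitted because $\ell_{x_0,\rho}$ is itself affine. There is no real obstacle: everything reduces to the reproducing identity and the second-moment evaluation on a ball, both of which are straightforward. The only point to be careful about is the index bookkeeping in the tensor product $u\otimes(x-x_0)$, to make sure the identification with elements of $\mathrm{Hom}(\R^n,\R^N)$ is consistent with the conventions fixed after \eqref{mini}.
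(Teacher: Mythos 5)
Your proposal is correct and is exactly the elementary calculation based on formula \eqref{mini} that the paper alludes to (the paper itself only cites Kronz and does not reproduce the argument): the reproducing property for affine maps, the second-moment identity $\dashint_{B_\rho(x_0)}(x-x_0)_k(x-x_0)_j\,dx=\delta_{kj}\rho^2/(n+2)$, and Cauchy--Schwarz give \eqref{9}, with \eqref{8} as the special case $\ell=\ell_{x_0,\rho}$ on $B_{\theta\rho}(x_0)$. No issues.
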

The estimate \eqref{9} implies, in particular, that $\ell_{x_0,\rho}$ has the following quasi-minimizing property for 
the $L^p$-norm. The proof can be founded in \cite[Section 2]{BDHS}.
\begin{lem}\label{ell-minimize}
Consider the minimizer of \eqref{7}, that is, $\ell_{x_0,\rho}$. For any affine functions
$\ell:\R^n\to\R^N$ and $p\geq 2$ we have
\begin{equation}
 \dashint_{B_\rho(x_0)}|u-\ell_{x_0,\rho}|^pdx
 \leq c(n,p)\dashint_{B_\rho(x_0)}|u-\ell|^pdx .
\end{equation}
\end{lem}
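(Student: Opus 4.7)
The plan is to exploit the explicit formula \eqref{mini} for $D\ell_{x_0,\rho}$ to derive an $L^p$-analogue of the comparison estimate \eqref{9}, and then conclude by the triangle inequality. The key observation is that \eqref{mini} applies to \emph{any} affine function $\ell(x)=\ell(x_0)+D\ell\,(x-x_0)$: by symmetry of the ball one has $\dashint_{B_\rho(x_0)}\ell\,dx=\ell(x_0)$ and $\frac{n+2}{\rho^2}\dashint_{B_\rho(x_0)}\ell\otimes(x-x_0)\,dx=D\ell$. Subtracting from \eqref{mini} then yields the representation
\[
 D\ell_{x_0,\rho}-D\ell=\frac{n+2}{\rho^2}\dashint_{B_\rho(x_0)}(u-\ell)\otimes(x-x_0)\,dx.
\]

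Using $|x-x_0|\leq\rho$ on $B_\rho(x_0)$ and Jensen's inequality, the displayed identity gives the $L^p$-estimate
\[
 \rho^p\,|D\ell_{x_0,\rho}-D\ell|^p\leq (n+2)^p\dashint_{B_\rho(x_0)}|u-\ell|^p\,dx.
\]
Simultaneously, since $\ell_{x_0,\rho}(x_0)=u_{x_0,\rho}$ and $\ell(x_0)=\dashint_{B_\rho(x_0)}\ell\,dx$, another application of Jensen produces
\[
 |\ell_{x_0,\rho}(x_0)-\ell(x_0)|^p=\Bigl|\dashint_{B_\rho(x_0)}(u-\ell)\,dx\Bigr|^p\leq\dashint_{B_\rho(x_0)}|u-\ell|^p\,dx.
\]

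Next I would write, for $x\in B_\rho(x_0)$,
\[
 \ell_{x_0,\rho}(x)-\ell(x)=\bigl(\ell_{x_0,\rho}(x_0)-\ell(x_0)\bigr)+(D\ell_{x_0,\rho}-D\ell)(x-x_0),
\]
take $|\cdot|^p$, use $|x-x_0|\leq\rho$ and the elementary inequality $(a+b)^p\leq 2^{p-1}(a^p+b^p)$, and average over $B_\rho(x_0)$. Combining with the two bounds above gives
\[
 \dashint_{B_\rho(x_0)}|\ell_{x_0,\rho}-\ell|^p\,dx\leq c(n,p)\dashint_{B_\rho(x_0)}|u-\ell|^p\,dx.
\]
The claim then follows from one more triangle inequality, $|u-\ell_{x_0,\rho}|^p\leq 2^{p-1}(|u-\ell|^p+|\ell-\ell_{x_0,\rho}|^p)$, followed by averaging.

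There is no genuine obstacle in this argument: the whole proof is elementary once one notices that \eqref{mini} provides a closed form for $D\ell_{x_0,\rho}$ which transposes directly to an $L^p$-estimate via Jensen, bypassing the Hilbert-space orthogonality argument that delivered \eqref{9} only in $L^2$. The only minor care is to track the constants so that the resulting $c(n,p)$ depends solely on $n$ and $p$, which is automatic from the two Jensen applications and the constant $(n+2)^p$.
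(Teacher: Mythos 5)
Your proof is correct, and it follows essentially the route the paper intends: the paper defers to \cite[Section 2]{BDHS}, where the argument is likewise to split $u-\ell_{x_0,\rho}=(u-\ell)+(\ell-\ell_{x_0,\rho})$ and control the affine difference through $\ell_{x_0,\rho}(x_0)=u_{x_0,\rho}$ and the representation formula \eqref{mini}. Your only (harmless) deviation is that you apply Jensen's inequality directly in $L^p$ to the representation of $D\ell_{x_0,\rho}-D\ell$, producing an $L^p$ analogue of \eqref{9}, instead of invoking the $L^2$ estimate \eqref{9} itself and upgrading via H\"older's inequality $\dashint_{B_\rho(x_0)}|u-\ell|^2dx\leq\bigl(\dashint_{B_\rho(x_0)}|u-\ell|^pdx\bigr)^{2/p}$.
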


Using Young's inequality, we obtain the following lemma. 

\begin{lem}\label{Young2}
Consider fixed $a,b\geq 0$, $p\geq 1$. Then for any $\varepsilon>0$, there exists
$K=K(p,\varepsilon)\geq 0$ satisfying
\begin{equation}
 (a+b)^p\leq (1+\varepsilon)a^p+Kb^p. \label{young2}
\end{equation}
\end{lem}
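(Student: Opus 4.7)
The plan is to reduce this to the convexity of $t\mapsto t^p$ on $[0,\infty)$, which for $p\ge 1$ is Young's inequality in disguise. For $p=1$ there is nothing to prove: simply take $K=1$, and the bound $(a+b)\le(1+\varepsilon)a+b$ is immediate. Hence the interesting case is $p>1$, and I will treat it by choosing a parameter $\lambda\in(0,1)$ appropriately.

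For $p>1$ I would write, for any $\lambda\in(0,1)$,
\[
 a+b=\lambda\cdot\frac{a}{\lambda}+(1-\lambda)\cdot\frac{b}{1-\lambda},
\]
and apply the convexity inequality $(\lambda x+(1-\lambda)y)^p\le\lambda x^p+(1-\lambda)y^p$ to obtain
\[
 (a+b)^p\le\lambda^{1-p}\,a^p+(1-\lambda)^{1-p}\,b^p.
\]
Now I would choose $\lambda$ so that the coefficient of $a^p$ equals $1+\varepsilon$, namely
\[
 \lambda:=(1+\varepsilon)^{-1/(p-1)}\in(0,1),
\]
and set
\[
 K:=\bigl(1-(1+\varepsilon)^{-1/(p-1)}\bigr)^{1-p},
\]
which is finite because $\lambda<1$. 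This gives exactly the asserted inequality \eqref{young2}, and the value of $K$ depends only on $p$ and $\varepsilon$.

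There is no real obstacle here; the lemma is an elementary convex-combination identity, and the only content is the explicit form of the constant $K(p,\varepsilon)$, which blows up as $\varepsilon\searrow 0$ (as it must, since for $\varepsilon=0$ the sharp inequality would be $(a+b)^p\le 2^{p-1}(a^p+b^p)$). Alternatively, one could invoke the standard form of Young's inequality $xy\le\varepsilon' x^p+C(p,\varepsilon')y^{p/(p-1)}$ applied to $x=a$, $y=1$ after expanding $(a+b)^p$, but the convexity argument above is the most direct route.
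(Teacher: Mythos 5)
Your proof is correct, and it takes a genuinely different (and shorter) route than the paper. You use the convexity of $t\mapsto t^p$ on $[0,\infty)$ applied to the convex combination $a+b=\lambda\cdot\frac{a}{\lambda}+(1-\lambda)\cdot\frac{b}{1-\lambda}$, which immediately yields $(a+b)^p\le\lambda^{1-p}a^p+(1-\lambda)^{1-p}b^p$, and then you solve $\lambda^{1-p}=1+\varepsilon$ to get an explicit closed-form constant $K=\bigl(1-(1+\varepsilon)^{-1/(p-1)}\bigr)^{1-p}$; the degenerate cases $p=1$ and $a=0$ or $b=0$ are handled correctly. The paper instead proceeds in three stages: for odd integer exponents $p=2k-1$ it expands $(a+b)^p$ by the binomial theorem and absorbs each cross term $a^{p-m}b^m$ via Young's inequality; it repeats this for even integers $p=2k$ (where the middle term $a^kb^k$ needs separate treatment); and for general $p\ge1$ it factors $(a+b)^p=(a+b)^{[p]}(a+b)^{p-[p]}$, uses subadditivity of $t\mapsto t^{p-[p]}$ for the fractional part, and applies Young's inequality once more to the resulting cross terms. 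Your argument buys brevity, an explicit constant, and no case distinction on whether $p$ is an integer; the paper's argument is more elementary in that it only invokes the binomial theorem and the scalar Young inequality, but the constant it produces is implicit and the proof is considerably longer. Either proof is acceptable for the role the lemma plays later (in estimating the term IV in the Caccioppoli inequality), since only the existence and the dependence $K=K(p,\varepsilon)$ matter there.
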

\begin{proof}
 We first consider the case $p=2k-1$ for $k\in \N$. By binomial theorem, we have
\begin{align*}
 (a+b)^{2k-1}
 &=\sum_{m=0}^{2k-1}\begin{pmatrix} 2k-1 \\ m \end{pmatrix}a^{2k-1-m}b^m\\
 &=a^{2k-1}+b^{2k-1}
 +\sum_{m=1}^{k-1}\begin{pmatrix} 2k-1 \\ m \end{pmatrix}
 (a^{2k-1-m}b^m+a^mb^{2k-1-m}).
\end{align*}
Using Young's inequality, we obtain
\[
 \sum_{m=1}^{k-1}\begin{pmatrix} 2k-1 \\ m \end{pmatrix}
 (a^{2k-1-m}b^m+a^mb^{2k-1-m})
 \leq\sum_{m=1}^{k-1}\begin{pmatrix} 2k-1 \\ m \end{pmatrix}
 (\varepsilon' a^{2k-1}+c(k,m,\varepsilon')b^{2k-1}),
\]
where $\varepsilon'>0$ will be fixed later. Thus, we get
\begin{align*}
(a+b)^{2k-1}
 &\leq a^{2k-1}+b^{2k-1}
 +\sum_{m=1}^{k-1}\begin{pmatrix} 2k-1 \\ m \end{pmatrix}
 (\varepsilon' a^{2k-1}+c(k,m,\varepsilon')b^{2k-1})\\
 &=\left\{ 1+\varepsilon'
 \sum_{m=1}^{k-1}\begin{pmatrix} 2k-1 \\ m \end{pmatrix}
 \right\} a^{2k-1}+\left\{ 1+
 \sum_{m=1}^{k-1}\begin{pmatrix} 2k-1 \\ m \end{pmatrix}
 c(k,m,\varepsilon')\right\}b^{2k-1}.
\end{align*}
For any $\varepsilon>0$ we conclude \eqref{Young2} by taking $\varepsilon'$ as $\varepsilon=\varepsilon'\displaystyle
 \sum_{m=1}^{k-1}\begin{pmatrix} 2k-1 \\ m \end{pmatrix}$. \\
\quad In case of $p=2k$, we may estimate similarly as above, hence we get
\begin{align*}
 (a+b)^{2k}
 &=\sum_{m=0}^{2k}\begin{pmatrix} 2k \\ m \end{pmatrix}a^{2k-m}b^m\\
 &=a^{2k}+b^{2k}
 +\sum_{m=1}^{k-1}\begin{pmatrix} 2k \\ m \end{pmatrix}
 (a^{2k-m}b^m+a^mb^{2k-m})
 +\begin{pmatrix} 2k \\ k \end{pmatrix}a^kb^k\\
 &\leq a^{2k}+b^{2k}
 +\sum_{m=1}^{k-1}\begin{pmatrix} 2k \\ m \end{pmatrix}
 (\varepsilon' a^{2k}+c(k,m,\varepsilon')b^{2k})
 +\begin{pmatrix} 2k \\ k \end{pmatrix}\left(\varepsilon'a^{2k}
 +\frac{1}{\varepsilon'}b^{2k}\right)\\
 &=\left\{ 1+\varepsilon'
 \sum_{m=1}^{k}\begin{pmatrix} 2k \\ m \end{pmatrix}
 \right\} a^{2k}+\left\{ 1+
 \sum_{m=1}^{k-1}\begin{pmatrix} 2k \\ m \end{pmatrix}
 c(k,m,\varepsilon')+\frac{1}{\varepsilon'}\right\}b^{2k}.
\end{align*}
This conclude that we have \eqref{Young2} for $p\in\N$. \\
\quad For general $p\geq 1$, let $[p]$ be the greatest integer not greater then $p$. We write
\[
 (a+b)^p
 =(a+b)^{[p]}(a+b)^{p-[p]}.
\]
By $0\leq p-[p]< 1$, we have
\[
 (a+b)^{p-[p]}\leq a^{p-[p]}+b^{p-[p]}.
\]
For $\varepsilon'>0$ to be fixed later, we get 
\[
 (a+b)^{[p]}\leq (1+\varepsilon')a^{[p]}+K(p,\varepsilon') b^{[p]},
\]
since $[p]\in\N$. Combining two estimates, we obtain
\begin{align*}
 (a+b)^p
 &\leq \left\{(1+\varepsilon')a^{[p]}+K(p,\varepsilon') b^{[p]}\right\}
 (a^{p-[p]}+b^{p-[p]})\\
 &=(1+\varepsilon')a^p+K(p,\varepsilon') b^p
 +(1+\varepsilon')a^{[p]}b^{p-[p]}+K(p,\varepsilon') a^{p-[p]}b^{[p]} \\
 &\leq (1+\varepsilon')a^p+K(p,\varepsilon') b^p
 +(1+\varepsilon'+K(p,\varepsilon'))
 (a^{[p]}b^{p-[p]}+a^{p-[p]}b^{[p]}).
\end{align*}
Again for $\varepsilon''>0$ to be fixed later, by using Young's inequality, we conclude
\[
 (a+b)^p\leq
 (1+\varepsilon')a^p+K(p,\varepsilon') b^p
 +(1+\varepsilon'+K(p,\varepsilon'))
 (\varepsilon''a^p+c(p,\varepsilon'')b^p).
\]
Take $\varepsilon'=\varepsilon/2$ and 
$\varepsilon''=\varepsilon'/(1+\varepsilon'+K(p,\varepsilon'))$, and this complete the proof.
\end{proof}
\begin{lem}[{\cite[Lemma 2.1]{GMo}}]\label{GM}
For $\delta \geq 0$, and for all $a,b\in\R^k$ we have 
\begin{equation}
 4^{-(1+2\delta)}\leq
 \frac{\displaystyle\int_0^1(1+|sa+(1-s)b|^2)^{\delta/2}ds}{(1+|a|^2+|b-a|^2)^{\delta/2}}
 \leq 4^\delta . \label{GM2}
\end{equation}
\end{lem}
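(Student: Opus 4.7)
My plan is to prove the two inequalities separately. The upper bound is essentially immediate. Writing $sa+(1-s)b = a+(1-s)(b-a)$ and applying $(x+y)^2 \leq 2(x^2+y^2)$ gives
\[
1 + |sa+(1-s)b|^2 \leq 1 + 2|a|^2 + 2(1-s)^2|b-a|^2 \leq 2\bigl(1+|a|^2+|b-a|^2\bigr)
\]
uniformly in $s\in[0,1]$; raising to the power $\delta/2$ and integrating yields the upper bound with the slightly sharper constant $2^{\delta/2}\leq 4^\delta$.

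For the lower bound I would first reduce to a one-dimensional integral. Assume $a\neq b$ (the case $a=b$ is trivial). Set $\hat{e}:=(a-b)/|a-b|$, $L:=|a-b|$, $\alpha:=\langle a,\hat{e}\rangle$, and $D^2:=1+|a|^2-\alpha^2\geq 1$. Since $b = a-L\hat{e}$, the vectors $a$ and $b$ share the same component perpendicular to $\hat{e}$, so $|sa+(1-s)b|^2 = (\alpha-(1-s)L)^2 + (D^2-1)$. The substitution $u = \alpha - (1-s)L$ then rewrites the ratio as
\[
\frac{1}{L(D^2+\alpha^2+L^2)^{\delta/2}}\int_{\alpha-L}^{\alpha}(D^2+u^2)^{\delta/2}\,du,
\]
and it suffices to show that this quantity is bounded below by $4^{-(1+2\delta)}$. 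Next I would find a subinterval $J\subset[\alpha-L,\alpha]$ of length at least $L/4$ on which $|u|\geq M/2$, where $M:=\max(|\alpha|,|\alpha-L|)$. Since $|\alpha|+|\alpha-L|\geq L$ one has $M\geq L/2$, and trivially $M\geq|\alpha|$; a short case analysis based on the sign of $\alpha$ and on whether $M=|\alpha|$ or $M=|\alpha-L|$ produces such a $J$ in every situation.

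On $J$ the elementary estimate $D^2+u^2 \geq D^2+M^2/4 \geq (D^2+\alpha^2+L^2)/40$ holds, where the last step uses $M^2\geq\max(\alpha^2,L^2/4)\geq(\alpha^2+L^2)/8$. Integrating over $J$ yields the lower bound $\tfrac{1}{4}\cdot 40^{-\delta/2}(D^2+\alpha^2+L^2)^{\delta/2}$ for the average, and the comparison $40\leq 16^2$ converts this into the required constant $4^{-(1+2\delta)}$. The main obstacle is the bookkeeping in the case analysis producing $J$ with $|J|\geq L/4$: one must separately treat $\alpha\geq L/2$, $0\leq\alpha\leq L/2$, and $\alpha<0$. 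Each subcase is elementary, but all must be verified to close the argument.
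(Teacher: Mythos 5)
The paper offers no proof of this lemma at all---it is quoted verbatim from Giaquinta--Modica \cite[Lemma 2.1]{GMo}---so there is nothing internal to compare against; I have therefore checked your argument on its own terms, and it is correct. The upper bound via $sa+(1-s)b=a+(1-s)(b-a)$ and $(x+y)^2\le 2(x^2+y^2)$ is fine and even gives the sharper constant $2^{\delta/2}$. The reduction of the lower bound to the one-dimensional integral $\frac{1}{L}\int_{\alpha-L}^{\alpha}(D^2+u^2)^{\delta/2}du$ over $(D^2+\alpha^2+L^2)^{\delta/2}$ is exact (the component of $sa+(1-s)b$ orthogonal to $\hat{e}$ is constant in $s$, and the denominator equals $D^2+\alpha^2+L^2$ since $D^2=1+|a|^2-\alpha^2$). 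The only piece you leave to the reader---the subinterval $J\subset[\alpha-L,\alpha]$ of length $\ge L/4$ on which $|u|\ge M/2$---does exist in every case: if $M=|\alpha|$ with $\alpha\ge 0$ take $J=[\max(\alpha-L,\alpha/2),\alpha]$, whose length is $\min(L,M/2)\ge L/4$ because $M\ge L/2$; the remaining cases ($M=|\alpha-L|$, or negative signs) are mirror images. The arithmetic $M^2\ge\max(\alpha^2,L^2/4)\ge(\alpha^2+L^2)/8$, the resulting factor $\frac14\cdot 40^{-\delta/2}$, and the comparison $40\le 16^2$ all check out, so the stated constant $4^{-(1+2\delta)}$ is recovered. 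This is essentially the classical argument (restrict to the segment, bound the integrand from below on a definite fraction of the parameter interval), so you have in effect reconstructed the proof of the cited reference; the only thing I would ask for in a final write-up is the explicit three-line case analysis producing $J$, since that is the one step you flag rather than carry out.
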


\section{Proof of the main theorem}
To obtain the regularity result (Theorem \ref{pr}), we first prove Caccioppoli-type inequality. In the followings, we define $q>0$ as 
the dual exponent of $p\geq 2$, that is, $q=p/(p-1)$. Here we note that $q\leq 2$. 
\begin{lem}\label{Caccioppoli}
Let $u\in W^{1,p}(\Omega,\R^N)\cap L^{\infty}(\Omega,\R^N)$ be a bounded weak solution of the elliptic system 
\eqref{system} under the structure condition {\rm ({\bf H1}),({\bf H2}),({\bf H3}),({\bf H4})} and {\rm ({\bf H5})} 
with satisfying $\| u\|_{\infty}\leq M$ and $2^{(10-9p)/2}\lambda>a(M)M$. 
For any $ x_0\in\Omega$ and $\rho\leq 1$ with $B_{\rho}(x_0)\Subset\Omega$, 
and any affine functions $\ell:\R^n\to\R^N$ with $|\ell(x_0)|\leq M$, we have 
the estimate
\begin{align}
 \dashint_{B_{\frac{\rho}{2}}(x_0)}&\left\{\frac{|Du-D\ell|^2}{(1+|D\ell|)^2}
 +\frac{|Du-D\ell|^p}{(1+|D\ell|)^p}\right\} dx \notag\\
 \leq C_1&\Bigg[ \dashint_{B_\rho(x_0)}\left\{
 \frac{|u-\ell|^2}{\rho^2(1+|D\ell|)^2}+
 \frac{|u-\ell|^p}{\rho^p(1+|D\ell|)^p}\right\} dx \notag\\
 &+\omega \left(\dashint_{B_\rho(x_0)}|u-\ell(x_0)|^2dx \right)
 +V(\rho)+\left( a^q|D\ell|^q+b^q \right) \rho^q \Bigg], \label{caccioppoli}
\end{align}
with the constant $C_1=C_1(\lambda,p,L,a(M),M)\geq 1$. 
\end{lem}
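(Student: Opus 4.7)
The plan is to test the weak formulation \eqref{ws} with $\varphi:=\eta^{p}(u-\ell)$, where $\eta\in C^{\infty}_{c}(B_{\rho}(x_{0}))$ is a standard cut-off satisfying $\eta\equiv 1$ on $B_{\rho/2}(x_{0})$ and $|D\eta|\leq c/\rho$. Writing $v:=u-\ell$ and using that $D\ell$ is constant, $D\varphi=\eta^{p}Dv+p\eta^{p-1}D\eta\otimes v$. Into the resulting identity I would insert the telescopic splitting
\begin{align*}
 A(x,u,Du)&=\bigl[A(x,u,Du)-A(x,u,D\ell)\bigr]\\
 &\quad +\bigl[A(x,u,D\ell)-A(x,\ell(x_{0}),D\ell)\bigr]\\
 &\quad +\bigl[A(x,\ell(x_{0}),D\ell)-(A(\cdot,\ell(x_{0}),D\ell))_{x_{0},\rho}\bigr]\\
 &\quad +(A(\cdot,\ell(x_{0}),D\ell))_{x_{0},\rho};
\end{align*}
the last, constant-in-$x$ piece paired with $D\varphi$ vanishes since $\varphi$ has zero trace on $\partial B_{\rho}(x_{0})$, leaving three error terms and one monotonicity contribution.

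For the monotonicity piece I write $A(x,u,Du)-A(x,u,D\ell)=\bigl(\int_{0}^{1}D_{\xi}A(x,u,D\ell+s(Du-D\ell))\,ds\bigr)(Du-D\ell)$, pair with $\eta^{p}Dv$, and invoke H2 together with Lemma \ref{GM} (applied with $\delta=p-2$) to produce the lower bound $c(p)\lambda\int\eta^{p}(1+|Du|+|D\ell|)^{p-2}|Du-D\ell|^{2}\,dx$. A case-distinction on whether $|Du-D\ell|$ is larger or smaller than $1+|D\ell|$ then yields
\[
 (1+|Du|+|D\ell|)^{p-2}|Du-D\ell|^{2}\geq c(p)\Bigl[(1+|D\ell|)^{p-2}|Du-D\ell|^{2}+|Du-D\ell|^{p}\Bigr],
\]
which, after eventual division by $(1+|D\ell|)^{p}$, is exactly the hybrid $L^{2}$-$L^{p}$ quantity on the left-hand side of \eqref{caccioppoli}.

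The three error terms and the cut-off contribution coming from $p\eta^{p-1}D\eta\otimes v$ are absorbed by Lemma \ref{Young2} with a small parameter $\varepsilon>0$, combined with the pointwise bounds in H1, H3 and H4 and the same case-distinction as above. For the $u$-continuity piece the concavity of $\omega$ together with Jensen's inequality yields $\omega\bigl(\dashint_{B_{\rho}(x_{0})}|u-\ell(x_{0})|^{2}\,dx\bigr)$; for the VMO piece the elementary pointwise bound $V_{x_{0}}(x,\rho)^{q}\leq (2L)^{q-1}V_{x_{0}}(x,\rho)$ (valid since $q\geq 1$ and $V_{x_{0}}\leq 2L$) reduces the $L^{q}$-average of $V_{x_{0}}(\cdot,\rho)$ to a constant multiple of $V(\rho)$; the factor $|D\eta|\sim 1/\rho$ in the cut-off term is precisely what turns $|v|$ into the $|u-\ell|^{2}/\rho^{2}$ and $|u-\ell|^{p}/\rho^{p}$ contributions on the right-hand side of \eqref{caccioppoli}.

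The main obstacle is the inhomogeneous term $\int\langle f,\eta^{p}v\rangle\,dx$. Using $\|u\|_{\infty}\leq M$ and $|\ell(x_{0})|\leq M$ one first estimates $|v|\leq 2M+|D\ell|\rho$ on $B_{\rho}(x_{0})$; inserting H5 and then expanding $|Du|^{p}\leq(1+\varepsilon)|Du-D\ell|^{p}+K|D\ell|^{p}$ via Lemma \ref{Young2} produces a contribution $2Ma(M)(1+\varepsilon)\int\eta^{p}|Du-D\ell|^{p}\,dx$ that must be absorbed into the $L^{p}$-part of the left-hand side already identified above. This is precisely where the quantitative hypothesis $2^{(10-9p)/2}\lambda>a(M)M$ enters: the numerical constants $2^{-c(p)}$ accumulated through the case-distinction force the coefficient in front of the absorbed $L^{p}$-term into exactly this form, and strict inequality permits the absorption. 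The residual products involving $|D\ell|\rho$ and $b$ are treated by further Young's inequalities with conjugate exponents $(p,q)$ and collect into the term $(a^{q}|D\ell|^{q}+b^{q})\rho^{q}$. Dividing the resulting estimate by $(1+|D\ell|)^{p}$ then yields \eqref{caccioppoli} with $C_{1}=C_{1}(\lambda,p,L,a(M),M)$.
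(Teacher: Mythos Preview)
Your proposal is correct and follows essentially the same route as the paper: the same test function $\varphi=\eta^{p}(u-\ell)$, the same telescopic splitting of $A(x,u,Du)$ into monotone, $u$-continuity, VMO, and constant pieces, the lower bound via ({\bf H2}) combined with Lemma~\ref{GM}, the estimates of the error terms via ({\bf H1}), ({\bf H3}), ({\bf H4}) together with Young's inequality and the pointwise bound $V_{x_0}^{q}\leq (2L)^{q-1}V_{x_0}$, and finally the treatment of the inhomogeneity by means of $|u-\ell|\leq 2M+|D\ell|\rho$ and Lemma~\ref{Young2}, with absorption governed by the hypothesis $2^{(10-9p)/2}\lambda>a(M)M$. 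The only cosmetic difference is that the paper obtains the hybrid lower bound $(1+|D\ell|)^{p-2}|Du-D\ell|^{2}+|Du-D\ell|^{p}$ directly from the algebraic inequality $(1+|D\ell|^{2}+|Du-D\ell|^{2})^{(p-2)/2}\geq c(p)\bigl[(1+|D\ell|)^{p-2}+|Du-D\ell|^{p-2}\bigr]$ rather than phrasing it as a case distinction, but this is the same observation.
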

\begin{proof}
Assume $x_0\in\Omega$ and $\rho\leq 1$ satisfy $B_{\rho}(x_0)\Subset\Omega$.
We take a standard cut-off function $\eta\in C^\infty_0(B_\rho(x_0))$
satisfying $0\leq\eta\leq 1$, $|D\eta|\leq 4/\rho$, $\eta\equiv 1$ on $B_{\rho/2}(x_0)$.
Then $\varphi :=\eta^p(u-\ell )$ is admissible as a test function in \eqref{ws}, and we obtain
\begin{align}
 \dashint_{B_\rho(x_0)}\eta^p\langle &A(x,u,Du),Du-D\ell \rangle dx \notag\\
 =-\,&\dashint_{B_\rho(x_0)}\langle A(x,u,Du),p\eta^{p-1}
 D\eta\otimes (u-\ell)\rangle dx
 +\dashint_{B_\rho(x_0)}\langle f,\varphi\rangle dx. \label{system2}
\end{align}
Furthermore, we have
\begin{align}
 -\,\dashint_{B_\rho(x_0)}\eta^p\langle &A(x,u,D\ell),Du-D\ell \rangle dx
\notag\\
 =\,&\dashint_{B_\rho(x_0)}\langle A(x,u,D\ell),p\eta^{p-1} D\eta\otimes
 (u-\ell)\rangle dx-\dashint_{B_\rho(x_0)}\langle A(x,u,D\ell),D\varphi
 \rangle dx, \label{13}
\end{align}
and 
\begin{equation}
 \dashint_{B_\rho(x_0)}\langle \left( A(\cdot ,\ell(x_0)
 ,D\ell)\right)_{x_0,\rho},D\varphi \rangle dx=0. \label{constcoeff}
\end{equation}
Adding \eqref{system2}, \eqref{13} and \eqref{constcoeff}, we obtain
\begin{align}
&\dashint_{B_\rho(x_0)}\eta^p \langle A(x,u,Du)-A(x,u,D\ell),
 Du-D\ell\rangle dx \notag \\
 =&-\dashint_{B_\rho(x_0)}\langle A(x,u,Du)-A(x,u,D\ell),
 p\eta^{p-1} D\eta\otimes (u-\ell)\rangle dx \notag \\
 &-\dashint_{B_\rho(x_0)}\langle
A(x,u,D\ell)-A(x,\ell(x_0),D\ell),D\varphi\rangle dx
 \notag \\
 &-\dashint_{B_\rho(x_0)}\langle A(x,\ell(x_0),D\ell)-\left(
 A(\cdot,\ell(x_0),D\ell)\right)_{x_0,\rho},D\varphi \rangle dx \notag \\
 &+\dashint_{B_\rho(x_0)}\langle f,\varphi\rangle dx \notag \\
 =:& \>\> \hbox{I}+\hbox{II}+\hbox{III}+\hbox{IV}. \label{caccio-divide}
\end{align}
The terms $\hbox{I},\hbox{II},\hbox{III},\hbox{IV}$ are defined above. 
Using the ellipticity condition ({\bf H2}) to the left-hand side of \eqref{caccio-divide}, we get
\begin{align}
 &\langle A(x,u,Du)-A(x,u,D\ell),Du-D\ell\rangle \notag\\
 =&\int_0^1\left\langle D_\xi A(x,u,sDu+(1-s)D\ell)(Du-D\ell),
 Du-D\ell\right\rangle ds \notag\\
 \geq& \lambda |Du-D\ell|^2\int_0^1(1+|sDu+(1-s)D\ell|)^{p-2}ds. \label{elliptic}
\end{align}
Then by using \eqref{GM2} in Lemma \ref{GM}, we obtain
\begin{align}
 &\langle A(x,u,Du)-A(x,u,D\ell),Du-D\ell\rangle \notag \\
 \geq& \lambda |Du-D\ell|^2\int_0^1
 (1+|sDu+(1-s)D\ell|^2)^{(p-2)/2}ds \notag\\
 \geq& 2^{(12-9p)/2}\lambda
 \left\{(1+|D\ell|)^{p-2}|Du-D\ell|^2+|Du-D\ell|^p\right\} . \label{elliptic2}
\end{align}
For $\varepsilon >0$ to be fixed later, using ({\bf H1}) and Young's inequality, we have
\begin{align}
 |\,\hbox{I}\,|
 \leq &\dashint_{B_\rho(x_0)}p\eta^{p-1}\left|\int_0^1D_\xi A(x,u,D\ell
 +s(Du-D\ell))(Du-D\ell)ds\right| |D\eta||u-\ell|dx \notag\\
 \leq &\dashint_{B_\rho(x_0)}
 c(p,L)\eta^{p-1}\left\{ (1+|D\ell|)^{p-2}+|Du-D\ell|^{p-2}\right\} |Du-D\ell||D\eta||u-\ell|dx \notag\\
 \leq &\varepsilon\dashint_{B_\rho(x_0)}\eta^p\left\{(1+|D\ell|)^{p-2}
 |Du-D\ell|^2+|Du-D\ell|^p\right\}dx \notag\\
 &+c(p,L,\varepsilon)\dashint_{B_\rho(x_0)}\left\{ (1+|D\ell|)^{p-2}\left|\frac{u-\ell}{\rho}\right|^2
 +\left|\frac{u-\ell}{\rho}\right|^p\right\}dx. \label{16}
\end{align}
In order to estimate $\hbox{II}$, we use ({\bf H3}), $D\varphi=\eta^p(Du-D\ell)+p\eta^{p-1}D\eta\otimes
(u-\ell)$, and again Young's inequality, we get 
\begin{align}
 |\,\hbox{II}\,|
 \leq &\varepsilon\dashint_{B_\rho(x_0)}\eta^p|Du-D\ell|^pdx
 +\varepsilon^{-q/p}\dashint_{B_\rho(x_0)}L^q\omega^q\left(|u-\ell(x_0)|^2\right)(1+|D\ell|)^p dx \notag\\
 &+\varepsilon\dashint_{B_\rho(x_0)}\left|\frac{u-\ell}{\rho}\right|^pdx
 +\varepsilon^{-q/p}\dashint_{B_\rho(x_0)}(4Lp)^q\omega^q\left(|u-\ell(x_0)|^2\right)(1+|D\ell|)^p dx \notag\\
 \leq &\varepsilon\dashint_{B_\rho(x_0)}\eta^p|Du-D\ell|^pdx
 +\varepsilon\dashint_{B_\rho(x_0)}\left|\frac{u-\ell}{\rho}\right|^pdx \notag\\
 &+c(p,L,\varepsilon)(1+|D\ell|)^p \omega\left(\dashint_{B_\rho(x_0)}|u-\ell(x_0)|^2dx\right), \label{17}
\end{align}
where we use Jensen's inequality in the last inequality. 
We next estimate $\hbox{III}$ by using the VMO-condition ({\bf H4}) and Young's inequality, we have
\begin{align*}
 |\,\hbox{III}\,|
 \leq & \frac{\varepsilon}{2^{p-1}}\dashint_{B_\rho(x_0)}
 \left\{\eta^p|Du-D\ell|+\frac{4p|u-\ell|}{\rho}\right\}^pdx
 +\left(\frac{2^{p-1}}{\varepsilon}\right)^{q/p}\dashint_{B_\rho(x_0)}
 {V_{x_0}}^q(x,\rho)(1+|D\ell|)^pdx.
\end{align*}
Then using the fact that ${V_{x_0}}^q={V_{x_0}}^{q-1}\cdot V_{x_0}\leq (2L)^{q-1}V_{x_0}\leq 2LV_{x_0}$, 
we infer
\begin{align}
 |\,\hbox{III}\,|\leq & \varepsilon\dashint_{B_\rho(x_0)}\eta^p|Du-D\ell|^pdx
 +c(p,\varepsilon)\dashint_{B_\rho(x_0)}\left|\frac{u-\ell}{\rho}\right|^pdx
 +c(p,L,\varepsilon)(1+|D\ell|)^pV(\rho). \label{18}
\end{align}
For $\varepsilon'>0$ to be fixed later, using ({\bf H5}), Lemma \ref{Young2} and Young's inequality, we have
\begin{align}
&|\,\hbox{IV}\,| \notag\\
 \leq & \dashint_{B_\rho(x_0)}a(|Du-D\ell|+|D\ell|)^p\eta^p|u-\ell|dx
 +\dashint_{B_\rho(x_0)}(b\eta\rho)\left|\frac{u-\ell}{\rho}\right|dx \notag\\
 \leq & \dashint_{B_\rho(x_0)}a\eta^p\left\{(1+\varepsilon')|Du-D\ell|^p
 +K(p,\varepsilon')|D\ell|^p\right\}|u-\ell|dx
 +\varepsilon b^q\rho^q +\varepsilon^{-p/q}\dashint_{B_\rho(x_0)}\left|\frac{u-\ell}{\rho}\right|^pdx \notag\\
 \leq & a(1+\varepsilon^\prime)(2M+|D\ell|\rho) \dashint_{B_\rho(x_0)}\eta^p|Du-D\ell|^pdx 
 +c(p,\varepsilon)\dashint_{B_\rho(x_0)} \left|\frac{u-\ell}{\rho}\right|^pdx \notag\\
 &+\varepsilon(1+|D\ell|)^p\rho^q\left\{ a^qK ^q |D\ell|^{q}+b^q\right\}. \label{19}
\end{align}
Combining \eqref{caccio-divide}, \eqref{elliptic2}, \eqref{17}, \eqref{18} and \eqref{19}, and set
$\lambda'=2^{(12-9p)/2}\lambda$C
$\Lambda :=\lambda'-3\varepsilon-a(1+\varepsilon')(2M+|D\ell|\rho)$, this gives
\begin{align}
 &\Lambda\dashint_{B_\rho(x_0)}\eta^p
 \left\{\frac{|Du-D\ell|^2}{(1+|D\ell|)^2}+\frac{|Du-D\ell|^p}{(1+|D\ell|)^p}\right\}dx \notag\\
 \leq & c(p,L,\varepsilon)\left[\dashint_{B_\rho(x_0)}
 \left\{\left|\frac{u-\ell}{\rho(1+|D\ell|)}\right|^2
 +\left|\frac{u-\ell}{\rho(1+|D\ell|)}\right|^p\right\}dx 
 +\omega\left(\dashint_{B_\rho(x_0)}|u-\ell(x_0)|^2dx\right) +V(\rho) \right] \notag\\
 &+\varepsilon\left\{ a^q(1+K(p,\varepsilon'))^q|D\ell|^q+b^q\right\}\rho^q. \label{roughcaccio}
\end{align}
Now choose $\varepsilon=\varepsilon(\lambda ,p,a(M),M)>0$ and $\varepsilon'=\varepsilon'(\lambda ,p,a(M),M)>0$ in a right way 
(for more precise way of choosing $\varepsilon$ and $\varepsilon'$, we refer to \cite[Lemma 4.1]{DG}), 
we obtain \eqref{caccioppoli}. 
\end{proof}

\begin{rem}\label{katei}
If we insert $p=2$ to the ``smallness condition'' $2^{(10-9p)/2}\lambda>a(M)M$, we obtain $\lambda/16>a(M)M$. 
On the other hand, we only need $\lambda/2>a(M)M$ to prove the Caccioppoli-type inequality {\rm (}Lemma \ref{Caccioppoli}{\rm )} 
since the term $(1+|sDu+(1-s)\nu|)^{p-2}$ in \eqref{elliptic} vanishes when $p=2$. This gap happens because 
the left-hand side inequality of \eqref{GM2} in Lemma \ref{GM}, which we used to estimate $(1+|sDu+(1-s)\nu|)^{p-2}$ 
from below, could not take equal when $\delta =p-2=0$. 
\end{rem}
To use the $\A$-harmonic approximation lemma, we need to estimate 
$\dashint_{B_\rho(x_0)}\A(D(u-\ell),D\varphi)dx$.
\begin{lem}\label{A-harm2}
Assume the same assumption in Lemma \ref{Caccioppoli}. Then for any $x_0\in\Omega$ 
and $\rho\leq \rho_0$ satisfy $B_{2\rho}(x_0)\Subset\Omega$, and any affine functions $\ell:\R^n\to\R^N$ 
with $|\ell(x_0)|\leq M$, the inequality
\begin{align}
 \dashint_{B_\rho(x_0)}\mathcal{A}(Dv,D\varphi )dx
 \leq C_2(1+|D\ell|)\biggl[ \mu^{1/2}&\left(\sqrt{\Psi_*(x_0,2\rho,\ell)}\right)
 \sqrt{\Psi_*(x_0,2\rho,\ell)} \notag\\
 &+\Psi_*(x_0,2\rho,\ell)+\rho(a|D\ell|^p+b)\biggr]
 \sup_{B_\rho(x_0)}|D\varphi| \label{Ah}
\end{align}
holds for all $\varphi\in C^\infty_0(B_\rho(x_0),\R^N)$ and a constant $C_2=C_2(n,\lambda,L,p,a(M))\geq 1$, where 
\begin{align*}
 \A(Dv,D\varphi):&=\frac{1}{(1+|D\ell|)^{p-1}}\left\langle
 \left( D_\xi A(\cdot,\ell(x_0),D\ell)\right)_{x_0,\rho}
 Dv,D\varphi\right\rangle , \\
 \Phi(x_0,\rho,\ell):&=\dashint_{B_\rho(x_0)}
 \left\{\frac{|Du-D\ell|^2}{(1+|D\ell|)^2}
 +\frac{|Du-D\ell|^p}{(1+|D\ell|)^p}\right\} dx, \\
 \Psi(x_0,\rho,\ell):&=\dashint_{B_\rho(x_0)}
 \left\{\frac{|u-\ell|^2}{\rho^2(1+|D\ell|)^2}
 +\frac{|u-\ell|^p}{\rho^p(1+|D\ell|)^p}\right\} dx, \\
 \Psi_*(x_0,\rho,\ell):&=\Psi(x_0,\rho,\ell)
 +\omega\left(\dashint_{B_\rho(x_0)}|u-\ell(x_0)|^2dx\right)+V(\rho)
 +\left( a^q|D\ell|^q+b^q\right)\rho^q, \\
 v:&=u-\ell=u-\ell(x_0)-D\ell(x-x_0).
\end{align*}
\end{lem}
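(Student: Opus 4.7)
The plan is to test the weak formulation~\eqref{ws} against $\varphi$ and rewrite $\dashint \A(Dv,D\varphi)\,dx$ as a sum of terms, each of which is then controlled either by one of the moduli $\mu,\omega,V$, by the growth condition (H5), or by the Caccioppoli inequality of Lemma~\ref{Caccioppoli} applied on $B_{2\rho}$.

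Since $\varphi\in C^\infty_0(B_\rho(x_0),\R^N)$, the constants $(A(\cdot,\ell(x_0),D\ell))_{x_0,\rho}$ and $(D_\xi A(\cdot,\ell(x_0),D\ell))_{x_0,\rho} D\ell$ both integrate against $D\varphi$ to zero. Inserting these, together with the intermediate values $A(x,u,D\ell)$ and $A(x,\ell(x_0),D\ell)$, produces the identity
\[
(1+|D\ell|)^{p-1}\dashint_{B_\rho(x_0)} \A(Dv,D\varphi)\,dx
=\dashint f\varphi\,dx-\mathrm{I}-\mathrm{II}-\mathrm{III},
\]
where $\mathrm{I}:=\dashint[A(x,u,Du)-A(x,u,D\ell)-(D_\xi A(\cdot,\ell(x_0),D\ell))_{x_0,\rho}(Du-D\ell)]\cdot D\varphi\,dx$, $\mathrm{II}:=\dashint[A(x,u,D\ell)-A(x,\ell(x_0),D\ell)]\cdot D\varphi\,dx$, and $\mathrm{III}:=\dashint[A(x,\ell(x_0),D\ell)-(A(\cdot,\ell(x_0),D\ell))_{x_0,\rho}]\cdot D\varphi\,dx$. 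Term $\mathrm{II}$ will be estimated directly by (H3) combined with Jensen's inequality for the concave $\omega$, and $\mathrm{III}$ by (H4) together with the definition of $V(\rho)$; each contributes $\lesssim(1+|D\ell|)^{p-1}\Psi_*\sup|D\varphi|$. For $\dashint f\varphi\,dx$ I will use (H5), $|\varphi|\le 2\rho\sup|D\varphi|$, and Lemma~\ref{Caccioppoli} on $B_{2\rho}$ to bound $\dashint_{B_\rho}|Du|^p\,dx$, producing the contribution $\lesssim\rho(1+|D\ell|)^{p-1}[(1+|D\ell|)\Psi_*+a|D\ell|^p+b]\sup|D\varphi|$, which after dividing by $(1+|D\ell|)^{p-1}$ fits the $(1+|D\ell|)[\Psi_*+\rho(a|D\ell|^p+b)]$ portion of the target.

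The main term $\mathrm{I}$ requires more care and will be treated by the dichotomy $B_\rho=E^+\cup E^-$ with $E^+:=\{|Du-D\ell|>1+|D\ell|\}$. On $E^+$ the boundedness of $D_\xi A$ from (H1) gives a pointwise bound of order $|Du-D\ell|^{p-1}$; the inequality $|Du-D\ell|^{p-1}\le|Du-D\ell|^p/(1+|D\ell|)$ valid on $E^+$, together with Caccioppoli, contributes an $(1+|D\ell|)^{p-1}\Psi_*$ factor. On $E^-$ I will use the Taylor identity $A(x,u,Du)-A(x,u,D\ell)=\int_0^1 D_\xi A(x,u,D\ell+s(Du-D\ell))(Du-D\ell)\,ds$ and (H1) with the modulus $\mu$ to obtain the pointwise bound $|\cdot|\le cL\,\mu(|Du-D\ell|/(1+|D\ell|))\,(1+|D\ell|)^{p-2}|Du-D\ell|$; an $L^2$ Cauchy--Schwarz combined with Jensen's inequality (exploiting $\mu\le 1$, so $\mu^2\le\mu$, and the concavity of $\mu$) upgrades this to $c(1+|D\ell|)^{p-1}\mu^{1/2}(\sqrt{\Phi})\sqrt{\Phi}$, and one further application of Lemma~\ref{Caccioppoli} replaces $\Phi$ by $C_1\Psi_*(x_0,2\rho,\ell)$, producing the desired $\mu^{1/2}(\sqrt{\Psi_*})\sqrt{\Psi_*}$ contribution.

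The hard part will be bookkeeping the ``coefficient--oscillation'' portion of $\mathrm{I}$, i.e.\ the piece left over from $[D_\xi A(x,u,D\ell+s(Du-D\ell))-(D_\xi A(\cdot,\ell(x_0),D\ell))_{x_0,\rho}]$ after isolating the diagonal Taylor part: neither (H3) nor (H4) controls oscillations of $D_\xi A$ itself, so this remainder must be absorbed using only the uniform $L^\infty$ bound $|D_\xi A|\le L(1+|D\ell|)^{p-2}$ from (H1), an $L^2$--$L^2$ Cauchy--Schwarz against $|Du-D\ell|$, and Caccioppoli, so that the resulting factor is swallowed into the $(1+|D\ell|)\Psi_*$ summand of the target. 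Collecting the bounds for $\mathrm{I},\mathrm{II},\mathrm{III}$ and the $f$--term, dividing by $(1+|D\ell|)^{p-1}$, and choosing $C_2$ to absorb all arising constants (depending on $n,\lambda,L,p,a(M)$) yields~\eqref{Ah}.
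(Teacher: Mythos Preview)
Your decomposition differs from the paper's, and the difference matters precisely at the point you flag as ``hard.'' In your term $\mathrm{I}$ you write, via Taylor,
\[
A(x,u,Du)-A(x,u,D\ell)-(D_\xi A(\cdot,\ell(x_0),D\ell))_{x_0,\rho}(Du-D\ell)
=\int_0^1\bigl[D_\xi A(x,u,D\ell+sDv)-(D_\xi A(\cdot,\ell(x_0),D\ell))_{x_0,\rho}\bigr]Dv\,ds,
\]
and after splitting off the piece controlled by $\mu$ you are left with the remainder $D_\xi A(x,u,D\ell)-(D_\xi A(\cdot,\ell(x_0),D\ell))_{x_0,\rho}$. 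As you note, neither (H3) nor (H4) says anything about oscillations of $D_\xi A$, so the only available bound is the $L^\infty$ estimate $|D_\xi A|\le L(1+|D\ell|)^{p-2}$. But then any pairing with $|Du-D\ell|$ yields at best
\[
c(1+|D\ell|)^{p-2}\dashint_{B_\rho}|Du-D\ell|\,dx
\le c(1+|D\ell|)^{p-1}\Phi^{1/2}
\le c(1+|D\ell|)^{p-1}\sqrt{\Psi_*},
\]
after Caccioppoli. This is a $\sqrt{\Psi_*}$ contribution, which is strictly larger (for small $\Psi_*$) than both $\mu^{1/2}(\sqrt{\Psi_*})\sqrt{\Psi_*}$ and $\Psi_*$; it cannot be ``swallowed into the $(1+|D\ell|)\Psi_*$ summand'' as you claim. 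The subsequent use of Lemma~\ref{A-harm2} in Lemma~\ref{EI} requires exactly the stated form, so this weaker bound would not suffice.

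The fix is a different telescoping, which is what the paper does: keep the $D_\xi A$--difference purely in the $\xi$ variable at the \emph{averaged} coefficients and at $u=\ell(x_0)$, namely
\[
\mathrm{I}':=\dashint_{B_\rho}\int_0^1\bigl\langle\bigl[(D_\xi A(\cdot,\ell(x_0),D\ell))_{x_0,\rho}-(D_\xi A(\cdot,\ell(x_0),D\ell+sDv))_{x_0,\rho}\bigr]Dv,\,D\varphi\bigr\rangle\,ds\,dx,
\]
which is cleanly handled by $\mu$ alone. The $x$-- and $u$--oscillations are then pushed to the level of $A$ evaluated at $Du$ (not $D\ell$), i.e.\ $(A(\cdot,\ell(x_0),Du))_{x_0,\rho}-A(x,\ell(x_0),Du)$ and $A(x,\ell(x_0),Du)-A(x,u,Du)$, where (H4) and (H3) apply; the extra $|Du-D\ell|^{p-1}$ growth this introduces is absorbed into $\Phi$ via Young's inequality. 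With this rearrangement no uncontrolled $D_\xi A$--oscillation term appears, and one obtains exactly \eqref{Ah}.
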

\begin{proof}
Assume $x_0\in\Omega$ and $\rho\leq 1$ satisfy $B_{2\rho}(x_0)\Subset\Omega$.
Without loss of generality we may assume $\displaystyle\sup_{B_\rho(x_0)}|D\varphi|\leq 1$.
Note $\displaystyle\sup_{B_\rho(x_0)}|\varphi|\leq\rho\leq 1$. Using the fact that 
$\int_{B_\rho(x_0)}A(x_0,\xi,\nu)D\varphi dx=0$, we deduce
\begin{align}
 (1+|D\ell|)^{p-1}&\dashint_{B_\rho(x_0)}\mathcal{A}(Dv,D\varphi)dx \notag\\
 =&\dashint_{B_\rho(x_0)}\int_0^1
 \left\langle\left[\bigl( D_\xi A(\cdot,\ell(x_0),D\ell)\bigr)_{x_0,\rho}
 -\bigl( D_\xi A(\cdot,\ell(x_0),D\ell+sDv)\bigr)_{x_0,\rho}
 \right] Dv,D\varphi\right\rangle dsdx \notag\\
 &+\dashint_{B_\rho(x_0)}
 \left\langle\bigl(A(\cdot,\ell(x_0),Du)\bigr)_{x_0,\rho}-A(x,\ell(x_0),Du),
 D\varphi\right\rangle dx \notag\\
 &+\dashint_{B_\rho(x_0)}
 \langle A(x,\ell(x_0),Du)-A(x,u,Du),D\varphi\rangle dx \notag\\
 &+\dashint_{B_\rho(x_0)}\langle f,\varphi\rangle dx \notag\\
 =&:\hbox{I}+\hbox{II}+\hbox{III}+\hbox{IV} \label{21}
\end{align}
where terms $\hbox{I},\hbox{II},\hbox{III},\hbox{IV}$ are define above. 

Using the modulus of continuity $\mu$ from ({\bf H1}), Jensen's inequality and H\"{o}lder's inequality, we estimate 
\begin{align}
 |\,\hbox{I}\,|
 &\leq c(p,L)(1+|D\ell|)^{p-1}\dashint_{B_\rho(x_0)}\mu\left(\frac{|Du-D\ell|}{1+|D\ell|}\right)
 \left\{\frac{|Du-D\ell|}{1+|D\ell|}+\frac{|Du-D\ell|^{p-1}}{(1+|D\ell|)^{p-1}}\right\}dx \notag\\
 &\leq c\, (1+|D\ell|)^{p-1}\left[ \mu^{1/2}\left(\sqrt{\Phi(x_0,\rho,\ell)}\right)\sqrt{\Phi(x_0,\rho,\ell)}
 +\mu^{1/p}\left(\Phi^{1/2}(x_0,\rho,\ell)\right)\Phi^{1/q}(x_0,\rho,\ell)\right] \notag\\
 &\leq c\, (1+|D\ell|)^{p-1}\left[ \mu^{1/2}\left(\sqrt{\Phi(x_0,\rho,\ell)}\right) \sqrt{\Phi(x_0,\rho,\ell)}
 +\Phi(x_0,\rho,\ell)\right]. \label{22}
\end{align}
The last inequality follows from the fact that $a^{1/p}b^{1/q}=a^{1/p}b^{1/p}b^{(p-2)/p}\leq a^{1/2}b^{1/2}+b$ 
holds by Young's inequality. \\
\quad By using the VMO-condition, Young's inequality and the bound $V_{x_0}(x,\rho)\leq 2L$, 
the term $\hbox{II}$ can be estimated as 
\begin{align}
 |\,\hbox{II}\,|
 &\leq c(p)(1+|D\ell|)^{p-1}\dashint_{B_\rho(x_0)}
 \left\{V_{x_0}(x,\rho)+V_{x_0}(x,\rho)\frac{|Du-D\ell|^{p-1}}{(1+|D\ell|)^{p-1}}\right\}dx \notag\\
 &\leq c\, (1+|D\ell|)^{p-1} \left[\left( 1+(2L)^{p-1}\right) V(\rho)+\Phi(x_0,\rho,\ell)\right] . \label{23}
\end{align}
Similarly, we estimate the term $\hbox{III}$ by using the continuity condition ({\bf H3}), Young's inequality, the bound
$\omega\leq 1$ and Jensen's inequality. This leads us to 
\begin{align}
 |\,\hbox{III}\,|
 &\leq L\dashint_{B_\rho(x_0)}
 (1+|D\ell|+|Du-D\ell|)^{p-1}\omega\left(|u-\ell(x_0)|^2\right)dx \notag\\
 &\leq c(p,L)(1+|D\ell|)^{p-1} \left[\omega\left(\dashint_{B_\rho(x_0)}|u-\ell(x_0)|^2dx\right) +\Phi(x_0,\rho,\ell)\right] . 
 \label{24}
\end{align}
By using the growth condition ({\bf H5}) and $\displaystyle\sup_{B_\rho(x_0)}|\varphi|\leq\rho\leq 1$, we have
\begin{align}
 |\,\hbox{IV}\,|
 &\leq \dashint_{B_\rho(x_0)}\rho(a|Du|^p+b)dx \notag\\
 &\leq 2^{p-1}a(1+|D\ell|)^p\Phi(x_0,\rho,\ell)
 +2^{p-1}\rho(1+|D\ell|)^{p-1}(a|D\ell|^p+b). \label{25}
\end{align}
Combining \eqref{21} with the estimates \eqref{22}, \eqref{23}, \eqref{24} and \eqref{25},
we finally arrive at 
\begin{align*}
 \dashint_{B_\rho(x_0)}&\mathcal{A}(Dv,D\varphi)dx \\
 \leq & c(p,L,a(M))(1+|D\ell|) \notag\\
 &\times\left[ \mu^{1/2}\left(\sqrt{\Phi(x_0,\rho,\ell)}\right)
 \sqrt{\Phi(x_0,\rho,\ell)}+\Phi(x_0,\rho,\ell)
 +\Psi_*(x_0,\rho,\ell)+\rho(a|D\ell|^p+b)\right]\notag\\
 \leq& C_2(1+|D\ell|)
 \Biggl[ \mu^{1/2}\left(\sqrt{\Psi_*(x_0,2\rho,\ell)}\right)
 \sqrt{\Psi_*(x_0,2\rho,\ell)}
 +\Psi_*(x_0,2\rho,\ell)+\rho(a|D\ell|^p+b)\Biggr],
\end{align*}
where we use Caccioppoli-type inequality (Lemma \ref{Caccioppoli}), $\Phi(x_0,\rho,\ell)\leq
C_1\Psi_*(x_0,2\rho,\ell)$ and the concavity of $\mu$ to have $\mu(cs)\leq c\mu(s)$ for $c\geq 1$ at the last step.
\end{proof}
From now on, we write $\Phi(\rho)=\Phi(x_0,\rho,\ell_{x_0,\rho})$, 
$\Psi(\rho)=\Psi(x_0,\rho,\ell_{x_0,\rho})$, $\Psi_*(\rho)=\Psi_*(x_0,\rho,\ell_{x_0,\rho})$
for $x_0\in\Omega$ and $0<\rho\leq 1$. Here $\ell_{x_0,\rho}$ is a minimizer of (\ref{7}). 

Now we are in the position to establish the excess improvement. 
\begin{lem}\label{EI}
Assume the same assumptions with Lemma \ref{A-harm2}. Let $\theta\in (0,1/4]$ be arbitrary and impose the 
following smallness conditions on the excess:
\begin{enumerate}
 \item $\displaystyle{\mu^{1/2}\left( \sqrt{\Psi_*(\rho)}\right)
 +\sqrt{\Psi_*(\rho)}\leq\frac{\delta}{2}}$ with the constant 
 $\delta =\delta(n,N,p,\lambda,L,\theta^{n+p+2})$ from Lemma \ref{A-harm},
 \item $\displaystyle{\Psi(\rho)\leq\frac{\theta^{n+2}}{4n(n+2)}},$
 \item $\displaystyle{\gamma(\rho):=\left[{\Psi_*}^{q/2}(\rho)
 +\delta^{-q}\rho^q(a|D\ell_{x_0,\rho}|+b)^q\right]^{1/q}\leq 1}.$
\end{enumerate}
Then there holds the excess improvement estimate
\begin{equation}
 \Psi(\theta\rho)\leq C_3\theta^2\Psi_*(\rho) \label{EI-esti}
\end{equation}
with a constant $C_3\geq 1$ that depends only on $n$, $N$, $\lambda$, $L$, $p$, $a(M)$, $M$ and $\theta$. 
\end{lem}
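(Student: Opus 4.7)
The plan is to apply the $\A$-harmonic approximation lemma (Lemma \ref{A-harm}) to a suitably rescaled version of $v = u - \ell_{x_0,\rho}$, deduce a Campanato-type decay on $B_{\theta\rho}(x_0)$ for the approximating $\A$-harmonic map, and transfer this decay back to $\Psi(\theta\rho)$ through the quasi-minimality property of $\ell_{x_0,\theta\rho}$ from Lemma \ref{ell-minimize}.

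First I would set $K := 1 + |D\ell_{x_0,\rho}|$, pick $\sigma$ comparable to $\gamma(\rho)$ from condition (iii), and normalise by $w := v/(K\sigma)$. With the help of Caccioppoli (Lemma \ref{Caccioppoli}) applied on a slightly larger ball, and by selecting an auxiliary parameter $\tilde\gamma\in[0,1]$ of the size dictated by the $L^2$--$L^p$ balance, I would verify
$$\dashint_{B_{\rho/2}(x_0)}\bigl\{|Dw|^2 + \tilde\gamma^{p-2}|Dw|^p\bigr\} dx \leq 1.$$
Using Lemma \ref{A-harm2} and the bilinearity of $\A$ in its first argument,
$$\left|\dashint_{B_{\rho/2}(x_0)}\A(Dw,D\varphi) dx\right| \leq \frac{C_2}{\sigma}\Bigl[\mu^{1/2}\!\bigl(\sqrt{\Psi_*(\rho)}\bigr)\sqrt{\Psi_*(\rho)} + \Psi_*(\rho) + \rho(a|D\ell_{x_0,\rho}|^p+b)\Bigr]\sup|D\varphi|,$$
and smallness conditions (i) and (iii) are tailored precisely to make the bracket divided by $\sigma$ at most $\delta := \delta(n,N,L,\lambda,\theta^{n+p+2})$, the constant of Lemma \ref{A-harm}. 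That lemma, applied with $\varepsilon = \theta^{n+p+2}$, then yields an $\A$-harmonic $h$ on $B_{\rho/2}(x_0)$ satisfying
$$\dashint_{B_{\rho/2}(x_0)}\Bigl\{|(h-w)/\rho|^2 + \tilde\gamma^{p-2}|(h-w)/\rho|^p\Bigr\} dx \leq \theta^{n+p+2}, \quad \dashint_{B_{\rho/2}(x_0)}\bigl\{|Dh|^2 + \tilde\gamma^{p-2}|Dh|^p\bigr\} dx \leq c(n,p).$$

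Next, Theorem \ref{Campanato} applied to $h$ gives that its affine Taylor polynomial $\tilde h(x) := h(x_0) + Dh(x_0)(x - x_0)$ obeys $\sup_{B_{\theta\rho}(x_0)}|h - \tilde h|^2 \leq c\theta^4\rho^2$. Unscaling, $\ell^*(x) := \ell_{x_0,\rho}(x) + K\sigma\,\tilde h(x)$ is an affine function with
$$\dashint_{B_{\theta\rho}(x_0)}\frac{|u - \ell^*|^2}{(\theta\rho)^2} dx \leq K^2\sigma^2\bigl[c\theta^2 + \theta^{-(n+2)}\theta^{n+p+2}\bigr] \leq cK^2\sigma^2\theta^2,$$
and an analogous $L^p$-bound follows from the weighted estimates controlled by $\tilde\gamma^{p-2}$. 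The quasi-minimality of $\ell_{x_0,\theta\rho}$ (Lemma \ref{ell-minimize}) then passes this estimate to $\ell_{x_0,\theta\rho}$ in place of $\ell^*$. Finally, smallness condition (ii) combined with (\ref{8}) forces $|D\ell_{x_0,\rho} - D\ell_{x_0,\theta\rho}| \leq K/2$, hence $1 + |D\ell_{x_0,\theta\rho}| \geq K/2$, so that dividing by the appropriate powers of this quantity yields $\Psi(\theta\rho) \leq C_3\theta^2\sigma^2 \leq C_3\theta^2\Psi_*(\rho)$, which is the claim.

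The main obstacle will be the coherent treatment of the $L^2$- and weighted $L^p$-components of the excess through a single rescaling of $v$: the parameter $\tilde\gamma$ in Lemma \ref{A-harm} must be chosen so that both normalisations are simultaneously of unit size after the rescaling by $K\sigma$, and so that the Campanato decay transfers with the same factor $\theta^2$ to both components. This is precisely where the exact shape of $\gamma(\rho)$ in condition (iii), and the structure of the error terms in Lemma \ref{A-harm2}, are essential for the bookkeeping.
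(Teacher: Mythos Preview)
Your proposal is correct and follows essentially the same line as the paper's proof. The one point worth making precise is your ``main obstacle'': in the paper the auxiliary parameter $\tilde\gamma$ is simply taken to be $\gamma(\rho)$ itself, and the rescaling factor is exactly $\sigma = C_2\gamma(\rho)$, so that $w = (u-\ell_{x_0,\rho})/(C_2 K\gamma)$. With this choice the energy bound follows directly from Caccioppoli because $\gamma^{p-2}|Dw|^p = |Du-D\ell|^p/(C_2^p K^p\gamma^2)$ and $\gamma^2 \geq \Psi_*(\rho)$, while the approximate $\A$-harmonicity with constant $\delta$ follows from condition~(i) together with $\rho(a|D\ell|^p+b) \leq \delta\gamma\cdot(1+|D\ell|)^{p-1}$; no separate tuning of $\tilde\gamma$ is needed.
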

\begin{proof}
We first rescale $u$ and set 
\[
 w:=\frac{u-\ell_{x_0,\rho}}{C_2(1+|D\ell_{x_0,\rho}|)\gamma}.
\]
We claim that $w$ satisfies the assumptions of Lemma \ref{A-harm}. By Lemma \ref{A-harm2}, with $\rho/2$
and $\ell_{x_0,\rho}$ instead of $\rho$ and $\ell$, and assumption (i), the map $w$ is approximately $\A$-harmonic
in the sense that 
\begin{align*}
 \dashint_{B_{\rho/2}(x_0)}\A(Dw,D\varphi)dx
 \leq& \left[\mu^{1/2}\left(\sqrt{\Psi_*(\rho)}\right)
 +\sqrt{\Psi_*(\rho)}+\frac{\delta}{2} \right] \sup_{B_{\rho/2}(x_0)}|D\varphi| \\
 \leq& \delta\sup_{B_{\rho/2}(x_0)}|D\varphi|,
\end{align*}
for all $\varphi\in C^\infty_0(B_{\rho/2}(x_0),\R^N)$, with the constant $\delta$ determined by Lemma \ref{A-harm} for the
choice $\varepsilon=\theta^{n+p+2}$.
Moreover, the choice of $C_2$, which implies $C_2\geq C_1$, and the Caccioppoli-type inequality (Lemma \ref{Caccioppoli}) infer
\[
 \dashint_{B_{\rho/2}(x_0)}\left\{|Dw|^2+\gamma^{p-2}|Dw|^p\right\}dx
 \leq \frac{C_1\Psi_*(\rho)}{{C_2}^2\gamma^2}
 \leq \frac{C_1}{{C_2}^2}
 \leq 1.
\]
Thus, Lemma \ref{A-harm} ensures the existence of an $\A$-harmonic map $h$ with the properties
\begin{align}
 \dashint_{B_{\rho/2}(x_0)}&\left\{\left|\frac{w-h}{\rho/2}\right|^2
 +\gamma^{p-2}\left|\frac{w-h}{\rho/2}\right|^p\right\} dx
 \leq\theta^{n+p+2} , \label{energybound}\\
 \dashint_{B_{\rho/2}(x_0)}&\left\{|Dh|^2
 +\gamma^{p-2}|Dh|^p\right\}dx\leq c(n,p).
\end{align}
Since $h$ is $\A$-harmonic, Theorem \ref{Campanato} yields the estimate for $s=2$ as well as for $s=p$
\[
 \sup_{B_{\rho/4}(x_0)}|D^2h|^s
 \leq c(s,n,N,p,\lambda ,L)\left(\frac{\rho}{2}\right)^{-s}.
\]
Therefore, using Taylor's theorem, we have the decay estimate, where $\theta\in(0,1/4]$ can be chosen arbitrarily: 
\begin{align*}
 &\gamma^{s-2}(\theta\rho)^{-s}\dashint_{B_{\theta\rho}(x_0)}
 |w-h(x_0)-Dh(x_0)(x-x_0)|^sdx \\
 \leq& 2^{s-1}\gamma^{s-2}(\theta\rho)^{-s}\left[\dashint_{B_{\theta\rho}(x_0)}|w-h|^sdx
 +\dashint_{B_{\theta\rho}(x_0)}|h(x)-h(x_0)-Dh(x_0)(x-x_0)|^sdx\right]\\
 \leq &c(s,n,N,p,\lambda ,L)\theta^2.
\end{align*}
Here we applied the energy bound \eqref{energybound} for the last estimate.
Scaling back to $u$ and using Lemma \ref{ell-minimize}, we conclude
\begin{align}
 &(\theta\rho)^{-s}\dashint_{B_{\theta\rho}(x_0)}|u-\ell_{x_0,\theta\rho}|^sdx \notag\\
 \leq &c(n,s)(\theta\rho)^{-s}\dashint_{B_{\theta\rho}(x_0)}
 |u-\ell_{x_0,\rho}-C_2\gamma(1+|D\ell_{x_0,\rho}|)
 \left( h(x_0)+Dh(x_0)(x-x_0)\right)|^sdx \notag\\
 =&c(s,n,N,p,\lambda ,L,a(M))(\theta\rho)^{-s}\gamma^s(1+|D\ell_{x_0,\rho}|)^s
 \dashint_{B_{\theta\rho}(x_0)}|w-h(x_0)-Dh(x_0)(x-x_0)|^sdx \notag\\
 \leq &c\, \gamma^2 (1+|D\ell_{x_0,\rho}|)^s\theta^2 \notag\\
 \leq &c\, (1+|D\ell_{x_0,\rho}|)^s\theta^2\left[{\Psi_*}^{q/2}(\rho)
 +2^{q/p}\delta^{-q}\Psi_*(\rho)\right]^{2/q} \notag\\
 \leq &c\, (1+|D\ell_{x_0,\rho}|)^s\theta^2\Psi_*(\rho). \label{ei-rough}
\end{align}
Here we would like to replace the term $|D\ell_{x_0,\rho}|$ on the right-hand side by $|D\ell_{x_0,\theta\rho}|$. 
For this, we use (\ref{8}) and the assumption (ii) in order to estimate
\begin{align*}
 |D\ell_{x_0,\rho}-D\ell_{x_0,\theta\rho}|^2
 \leq&\frac{n(n+2)}{(\theta\rho)^2}\dashint_{B_{\theta\rho}(x_0)}
 |u-\ell_{x_0,\rho}|^2dx \\
 \leq& \frac{n(n+2)}{\theta^{n+2}\rho^2}\dashint_{B_\rho(x_0)}
 |u-\ell_{x_0,\rho}|^2dx \\
 \leq& \frac{n(n+2)}{\theta^{n+2}}(1+|D\ell_{x_0,\rho}|)^2\Psi(\rho)
 \leq \frac{1}{4}(1+|D\ell_{x_0,\rho}|)^2.
\end{align*}
This yields 
\[
1+|D\ell_{x_0,\rho}|\leq1+|D\ell_{x_0,\theta\rho}|+|D\ell_{x_0,\rho}-D\ell_{x_0,\theta\rho}|
\leq1+|D\ell_{x_0,\theta\rho}|+\frac{1}{2}(1+|D\ell_{x_0,\rho}|),
\] 
and after reabsorbing the last term from the right-hand side on the left, we also obtain
\[
 1+|D\ell_{x_0,\rho}|\leq 2(1+|D\ell_{x_0,\theta\rho}|).
\]
Plugging this into \eqref{ei-rough}, we deduce
\[
 (\theta\rho)^{-s}\dashint_{B_{\theta\rho}(x_0)}|u-\ell_{x_0,\theta\rho}|^sdx
 \leq c(s,n,N,p,\lambda ,L,a(M))(1+|D\ell_{x_0,\theta\rho}|)^s\theta^2\Psi_*(\rho)
\]
for $s=2$ and $s=p$. Dividing by $(1+|D\ell_{x_0,\theta\rho}|)^s$, then adding the corresponding
terms for $s=2$ and $s=p$, we deduce the claim. 
\end{proof}

We fix an arbitrarily H\"{o}lder exponent $\alpha\in(0,1)$ and define the Campanato-type excess
\[
 C_\alpha(x_0,\rho):=\rho^{-2\alpha}
 \dashint_{B_\rho(x_0)}|u-u_{x_0,\rho}|^2dx.
\]
In the following lemma, we iterate the excess improvement estimate \eqref{EI-esti} from Lemma \ref{EI} and 
obtain the bounededness of the two excess functionals, $C_\alpha$ and $\Psi$. 
\begin{lem}\label{iteration}
Under the same assumption with Lemma \ref{EI}, for every $\alpha\in(0,1)$, there exists constants 
$\varepsilon_*,\kappa_*,\rho_*>0$ and $\theta\in(0,1/8]$, all depending at most on 
$n$, $N$, $\lambda$, $p$, $L$, $\alpha$, $\rho_0$, $\mu(\cdot)$, $\omega(\cdot)$, $V(\cdot)$, $a(M)$, $b$ and $M$, 
such that the conditions 
\begin{equation}
 \Psi(\rho)<\varepsilon_*,\quad\text{and}\quad
 C_\alpha(x_0,\rho)<\kappa_* \tag{$A_0$}
\end{equation}
for all $\rho\in(0,\rho_*)$ with $B_\rho(x_0)\Subset\Omega$, imply
\begin{equation}
 \Psi(\theta^k\rho)<\varepsilon_*,\quad\text{and}\quad
 C_\alpha(x_0,\theta^k\rho)<\kappa_* \tag{$A_k$}
\end{equation}
respectively, for every $k\in\N$. 
\end{lem}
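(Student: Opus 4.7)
The plan is to prove $(A_k)$ by strong induction on $k$, with base case $(A_0)$ given by hypothesis. For the inductive step, assuming $(A_j)$ for all $j\leq k-1$, I apply Lemma \ref{EI} at scale $\theta^{k-1}\rho$ to control $\Psi(\theta^k\rho)$, and I bound $C_\alpha(x_0,\theta^k\rho)$ via a separate Campanato-style estimate built from the same affine minimizers.

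The parameters must be calibrated in the right order. First fix $\theta\in(0,1/8]$ so that $C_3\theta^{2-2\alpha}\leq 1/8$, which is possible since $\alpha<1$; this forces $C_3\theta^2$ to be much smaller than $\theta^{2\alpha}$. Then choose $\varepsilon_*$ so small that (a) $\varepsilon_*\leq \theta^{n+2}/(4n(n+2))$, giving condition (ii) of Lemma \ref{EI} for free; (b) $\mu^{1/2}(\sqrt{\varepsilon_*})+\sqrt{\varepsilon_*}$ satisfies the required smallness for condition (i); and (c) $\sqrt{n(n+2)/\theta^{n+2}}\,\sqrt{\varepsilon_*}\leq 1/2$, which is the contraction rate needed to telescope \eqref{8}. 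Finally $\kappa_*$ and $\rho_*\leq\rho_0$ are chosen jointly so that $\omega(\rho_*^{2\alpha}\kappa_*)$, $V(\rho_*)$, and the growth-related contributions all fall below $\varepsilon_*/4$.

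To verify hypotheses (i)--(iii) of Lemma \ref{EI} at scale $\theta^{k-1}\rho$, one decomposes $\Psi_*(\theta^{k-1}\rho)$ into its four constituent pieces. The $\Psi$-piece is below $\varepsilon_*$ by induction. The $\omega$-piece has argument $(\theta^{k-1}\rho)^{2\alpha}C_\alpha(x_0,\theta^{k-1}\rho)\leq \rho_*^{2\alpha}\kappa_*$, hence is small by concavity of $\omega$ and the choice of $\rho_*,\kappa_*$. The $V$-piece is small by monotonicity of $V$. For the growth piece $(a^q|D\ell|^q+b^q)(\theta^{k-1}\rho)^q$, a uniform bound on $|D\ell_{x_0,\theta^{k-1}\rho}|$ is produced by telescoping \eqref{8}: $1+|D\ell_{x_0,\theta^{j+1}\rho}|\leq (1+c_\theta\sqrt{\varepsilon_*})(1+|D\ell_{x_0,\theta^j\rho}|)$ for $j<k$, while the initial $|D\ell_{x_0,\rho}|$ is controlled by $c\rho^{\alpha-1}\sqrt{\kappa_*}$ using \eqref{mini} and the definition of $C_\alpha$. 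Since $\theta(1+c_\theta\sqrt{\varepsilon_*})<1$ by (c), the factor $\theta^q$ per iteration dominates the chained growth. With all four pieces small, Lemma \ref{EI} yields $\Psi(\theta^k\rho)\leq C_3\theta^2\cdot 4\varepsilon_*<\varepsilon_*$. For $C_\alpha$, I use that $u_{x_0,\theta^k\rho}$ minimizes $c\mapsto\dashint_{B_{\theta^k\rho}}|u-c|^2$ over constants, so $\dashint|u-u_{x_0,\theta^k\rho}|^2\leq 2\theta^{-n}\dashint_{B_{\theta^{k-1}\rho}}|u-\ell_{x_0,\theta^{k-1}\rho}|^2+2(\theta^k\rho)^2|D\ell_{x_0,\theta^{k-1}\rho}|^2$; the first summand is controlled by $\Psi(\theta^{k-1}\rho)$ and the $|D\ell|$ bound, and after dividing by $(\theta^k\rho)^{2\alpha}$ the factor $(\theta^k\rho)^{2-2\alpha}$ keeps everything below $\kappa_*$ once $\rho_*$ is chosen small.

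The main obstacle is the simultaneous calibration of the four parameters $\theta,\varepsilon_*,\kappa_*,\rho_*$. The constant $C_3$ and the threshold $\delta$ from Lemma \ref{A-harm} both depend on $\theta$ (through the choice $\varepsilon=\theta^{n+p+2}$), while the telescoped growth $(1+c_\theta\sqrt{\varepsilon_*})^k$ must be beaten by the $\theta^q$-decay available in the inhomogeneous growth term, and $\omega, V$ must be suppressed using only the a priori smallness of $\rho_*$ and $\kappa_*$. Closing the induction requires each of these constraints to be compatible with the others, which is the core bookkeeping of the argument.
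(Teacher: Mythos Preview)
Your induction scheme is sound and the parameter ordering $\theta\to\varepsilon_*\to\kappa_*\to\rho_*$ is correct, but you take a needlessly indirect route to control $|D\ell_{x_0,\theta^k\rho}|$, and this is where your argument becomes delicate. You telescope via \eqref{8} to get $1+|D\ell_{x_0,\theta^{j+1}\rho}|\leq(1+c_\theta\sqrt{\varepsilon_*})(1+|D\ell_{x_0,\theta^j\rho}|)$, which after $k$ iterations gives a bound that \emph{grows} like $(1+c_\theta\sqrt{\varepsilon_*})^k$. For the $C_\alpha$ step you then need $(\theta^k\rho)^{2-2\alpha}(1+|D\ell_{x_0,\theta^{k-1}\rho}|)^2$ uniformly bounded, which forces the extra compatibility $\theta^{1-\alpha}(1+c_\theta\sqrt{\varepsilon_*})\leq 1$. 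Your condition (c), namely $c_\theta\sqrt{\varepsilon_*}\leq 1/2$, does not by itself guarantee this; you would need to tighten the choice of $\theta$ (or of $\varepsilon_*$) accordingly, and you have not made that explicit.

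The paper bypasses the telescoping entirely by exploiting the second inductive quantity $C_\alpha$ to bound $|D\ell|$ \emph{directly} at every scale. Applying \eqref{9} with the constant function $\ell\equiv u_{x_0,\theta^k\rho}$ gives
\[
 |D\ell_{x_0,\theta^k\rho}|^2\;\leq\;\frac{n(n+2)}{(\theta^k\rho)^2}\dashint_{B_{\theta^k\rho}(x_0)}|u-u_{x_0,\theta^k\rho}|^2\,dx
 \;=\;n(n+2)\,(\theta^k\rho)^{2\alpha-2}\,C_\alpha(x_0,\theta^k\rho)
 \;<\;n(n+2)\,\rho_*^{2\alpha-2}\,\kappa_*,
\]
so the inductive hypothesis $C_\alpha(x_0,\theta^k\rho)<\kappa_*$ already pins down $|D\ell_{x_0,\theta^k\rho}|$ with the right scaling. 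In particular $(\theta^k\rho)^{2-2\alpha}|D\ell_{x_0,\theta^k\rho}|^2\leq n(n+2)\kappa_*$ and $(a^q|D\ell_{x_0,\theta^k\rho}|^q+b^q)(\theta^k\rho)^q\leq\big((a\sqrt{n(n+2)\kappa_*})^q+b^q\big)\rho_*^{q\alpha}$, which feeds straight into $\Psi_*$ and into the $C_\alpha$ estimate without any chained product. This is the reason the paper carries both $\Psi$ and $C_\alpha$ through the induction: the Campanato excess is precisely what controls $|D\ell|$ at each step, so no growth accumulates and the parameter calibration is elementary. Your approach is salvageable with more care, but the direct bound via \eqref{9} is both simpler and what the authors actually do.
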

\begin{proof}
We begin by choosing the constants. First, let 
\[
 \theta:=\min\left\{\left(\frac{1}{16n(n+2)}\right)^{1/(2-2\alpha)},
 \frac{1}{\sqrt{4C_3}} \right\}\leq\frac{1}{8},
\]
with the constant $C_3$ determined in Lemma \ref{A-harm2}. In particular, the choice of $\theta=\theta(n,N,\lambda,L,a,M,\alpha)>0$
fixes the constant $\delta=\delta(n,N,\lambda,L,a,M,\alpha)>0$ from Lemma \ref{A-harm}. 
Next, we fix an $\varepsilon_*=\varepsilon_*(n,N,\lambda,L,a,M,\alpha,\mu(\cdot))>0$ sufficiently small to ensure
\[
 \varepsilon_*\leq\frac{\theta^{n+2}}{16n(n+2)}\quad
 \text{and}\quad \mu^{1/2}\left(\sqrt{4\varepsilon_*}\right)
 +\sqrt{4\varepsilon_*}\leq\frac{\delta}{2}.
\]
Then, we choose $\kappa_*=\kappa_*(n,N,\lambda,L,a,M,\alpha,\mu(\cdot),\omega(\cdot))>0$ so small that
\[
 \omega(\kappa_*)<\varepsilon_*.
\]
Finally, we fix $\rho_*=\rho_*(n,N,\lambda,p,L,\alpha,\rho_0,\mu(\cdot),\omega(\cdot),V(\cdot),a,b,M)>0$ small
enough to guarantee
\[
 \rho_*\leq\min\{\rho_0,{\kappa_*}^{1/(2-2\alpha)},1\}, 
 \quad V(\rho_*)<\varepsilon_*\quad\text{and}\quad
 \left\{\left( a\sqrt{n(n+2)\kappa_*}\right)^q+b^q\right\}{\rho_*}^{q\alpha}<\varepsilon_*.
\]
Now we prove the assertion ($A_k$) by induction. We assume that we have already established ($A_k$) up to some 
$k\in\N\cup\{0\}$. We begin with proving the first part of the assertion ($A_{k+1}$), that is, the one concerning 
$\Psi(\theta^{k+1}\rho)$. First, using \eqref{9} with $\ell\equiv u_{x_0,\theta^k\rho}$, we obtain
\begin{align}
 |D\ell_{x_0,\theta^k\rho}|^2&\leq\frac{n(n+2)}{(\theta^k\rho)^2}
 \dashint_{B_{\theta^k\rho}(x_0)}|u-u_{x_0,\theta^k\rho}|^2dx \notag\\
 &=n(n+2)(\theta^k\rho)^{2\alpha-2}C_\alpha(x_0,\theta^k\rho) \notag\\
 &\leq n(n+2){\rho_*}^{2\alpha-2}\kappa_*. \label{dell}
\end{align}
Thus, the assumption ($A_k$), the choice of $\kappa_*$ and $\rho_*$, and the above estimate infer
\begin{align}
 \Psi_*(\theta^k\rho)&\leq\Psi(\theta^k\rho)
 +\omega(C_\alpha(x_0,\theta^k\rho))+V(\theta^k\rho)
 +(a^q|D\ell_{x_0,\theta^k\rho}|^q+b^q)(\theta^k\rho)^q \notag\\
 &\leq\varepsilon_*+\omega(\kappa_*)+V(\rho_*)
 +\left(\left(a\sqrt{n(n+2)\kappa_*}\right)^q
 +b^q\right){\rho_*}^{q\alpha}<4\varepsilon_*. \label{psiepsi}
\end{align}
Now it is easy to check that our choice of $\varepsilon_*$ implies that the smallness condition assumptions (i) and (ii)
in Lemma \ref{EI} are satisfied on the level $\theta^k\rho$, that is, we have
\begin{equation}
 \mu^{1/2}\left(\sqrt{\Psi_*(\theta^k\rho)}\right)
 +\sqrt{\Psi_*(\theta^k\rho)}
 <\mu^{1/2}\left(\sqrt{4\varepsilon_*}\right)+\sqrt{4\varepsilon_*}
 \leq\frac{\delta}{2}, \label{sm1}
\end{equation}
and 
\begin{equation}
 \Psi(\theta^k\rho)<\varepsilon_*
 <\frac{\theta^{n+2}}{4n(n+2)}. \label{sm2}
\end{equation}
Furthermore, we have the smallness condition assumption (iii), that is,  
\begin{equation}
 \gamma(\theta^k\rho)=\left[{\Psi_*}^{q/2}(\theta^k\rho)+\delta^{-q}
 (\theta^k\rho)^q(a|D\ell_{x_0,\theta^k\rho}|+b)^q\right]^{1/q}\leq 1. \label{sm3}
\end{equation}
To check \eqref{sm3}, first, note that $\Psi_*(\theta^k\rho)<1$ holds by the estimate \eqref{psiepsi} and 
the choice of $\varepsilon_*$. This implies 
\begin{equation}
 {\Psi_*}^{q/2}(\theta^k\rho)\leq {\Psi_*}^{1/2}(\theta^k\rho)
 <\sqrt{4\varepsilon_*}\leq \frac{\delta}{4}. \label{gamma1st}
\end{equation}
Next, using \eqref{dell} and ${\rho_*}^{\alpha-1}\geq 1$, we obtain 
\begin{align*}
 \delta^{-q}(\theta^k\rho)^q(a|D\ell|_{x_0,\theta^k\rho}+b)^q
 \leq&\delta^{-q}{\rho_*}^q(a\sqrt{n(n+2)\kappa_*}{\rho_*}^{\alpha-1}+b)^q \\
 \leq&\delta^{-q}{\rho_*}^{q\alpha}(a\sqrt{n(n+2)\kappa_*}+b)^q\\
 \leq&\delta^{-q}{\rho_*}^{q\alpha}2^{q/p}
 \left\{\left(a\sqrt{n(n+2)\kappa_*}\right)^q+b^q\right\}. 
\end{align*}
Then the choice of $\rho_*$ and $\varepsilon_*$ imply
\begin{equation}
 \delta^{-q}(\theta^k\rho)^q(a|D\ell|_{x_0,\theta^k\rho}+b)^q
 \leq\delta^{-q}2^{q/p}\varepsilon_*
 \leq 2^{-4+q/p}\delta^{2-q}
 \leq\frac{\delta}{8}. \label{gamma2nd}
\end{equation}
Therefore combining \eqref{gamma1st} and \eqref{gamma2nd}, we have \eqref{sm3}. 
We may thus apply Lemma \ref{EI} with the radius $\theta^k\rho$ instead of $\rho$, which yields 
\[
 \Psi(\theta^{k+1}\rho)\leq C_3\theta^2\Psi_*(\theta^k\rho)
 <4C_3\theta^2\varepsilon_*\leq\varepsilon_*,
\]
by the choice of $\theta$. We have thus established the first part of the assertion ($A_{k+1}$) and it remains to 
prove the second one, that is, the one concerning $C_\alpha(x_0,\theta^{k+1}\rho)$. For this aim, we first compute
\[
 \frac{1}{(\theta^k\rho)^2}\dashint_{B_{\theta^k\rho}(x_0)}
 |u-\ell_{x_0,\theta^k\rho}|^2dx
 \leq (1+|D\ell_{x_0,\theta^k\rho}|)^2\Psi(\theta^k\rho)
 \leq 2\varepsilon_*+2\varepsilon_*|D\ell_{x_0,\theta^k\rho}|^2
\]
where we used the assumption ($A_k$) in the last step. Since 
$\ell_{x_0,\theta^k\rho}(x)=u_{x_0,\theta^k\rho}+D\ell_{x_0,\theta^k\rho} (x-x_0)$, we can estimate
\begin{align*}
 C_\alpha(x_0,\theta^{k+1}\rho)&\leq
 (\theta^{k+1}\rho)^{-2\alpha}\dashint_{B_{\theta^{k+1}\rho}(x_0)}
 |u-u_{x_0,\theta^k\rho}|^2dx \\
 &\leq 2(\theta^{k+1}\rho)^{-2\alpha}\left[\dashint_{B_{\theta^{k+1}\rho}(x_0)}
 |u-\ell_{x_0,\theta^k\rho}|^2dx+|D\ell_{x_0,\theta^k\rho}|^2(\theta^{k+1}\rho)^2\right]\\
 &\leq 2(\theta^{k+1}\rho)^{-2\alpha}\left[\theta^{-n}\dashint_{B_{\theta^k\rho}(x_0)}
 |u-\ell_{x_0,\theta^k\rho}|^2dx+|D\ell_{x_0,\theta^k\rho}|^2(\theta^{k+1}\rho)^2\right]\\
 &\leq 4(\theta^k\rho)^{2-2\alpha}\left[\varepsilon_*\theta^{-n-2\alpha}
 +|D\ell_{x_0,\theta^k\rho}|^2(\varepsilon_*\theta^{-n-2\alpha}+\theta^{2-2\alpha})\right] .
\end{align*}
Using \eqref{dell} and recalling the choice of $\rho_*$, $\varepsilon_*$ and $\theta$, we deduce
\begin{align*}
 C_\alpha(x_0,\theta^{k+1}\rho)
 &\leq 4{\rho_*}^{2-2\alpha}\left[\varepsilon_*\theta^{-n-2\alpha}+n(n+2)
 \kappa_*{\rho_*}^{2-2\alpha}(\varepsilon_*\theta^{-n-2\alpha}+\theta^{2-2\alpha})\right]\\
 &\leq\frac{1}{4}{\rho_*}^{2-2\alpha}\theta^{2-2\alpha}
 +8n(n+2)\kappa_*\theta^{2-2\alpha} \\
 &\leq\frac{1}{4}\kappa_*+\frac{1}{2}\kappa_*<\kappa_*.
\end{align*}
This proves the second part of the assertion ($A_{k+1}$) and finally we conclude the proof of the lemma. 
\end{proof}

Now, to obtain the regularity result (Theorem \ref{pr}), it is similar arguments as in \cite[Section 3.5]{BDHS} by using 
Lemma \ref{iteration}. \\
\mbox{}\\
{\bf Acknowledgments}\\
This results are obtained as the Master thesis in Tokyo University of Science under advising by Professor 
Atsushi Tachikawa. The author thanks Professor Atsushi Tachikawa for helpful discussions.

\def\cprime{$'$}

\providecommand{\bysame}{\leavevmode\hbox to3em{\hrulefill}\thinspace}

\providecommand{\MR}{\relax\ifhmode\unskip\space\fi MR }

\providecommand{\MRhref}[2]{%
  
\href{http://www.ams.org/mathscinet-getitem?mr=#1}{#2}
}

\providecommand{\href}[2]{#2}

\mbox{}\\
Taku Kanazawa \\
Graduate School of Mathematics \\
Nagoya University \\
Chikusa-ku, Nagoya, 464-8602, JAPAN \\
taku.kanazawa@math.nagoya-u.ac.jp
\end{document}